\newtheorem{thm}{Theorem}[section]
\newtheorem{prop}[thm]{Proposition}
\newtheorem{lemma}[thm]{Lemma}
\newtheorem{cor}[thm]{Corollary}
\theoremstyle{definition}
\newtheorem{remark}[thm]{Remark}
\newcommand{\fontsmall}{\fontsize{7pt}{8pt}\selectfont}
\newcommand{\fontnormal}{\fontsize{10pt}{12pt}\selectfont}
\newcommand{\ld}{\backslash}
\newcommand{\rd}{/}
\newcommand{\set}[1]{\{ #1 \}}
\newcommand{\meet}{\mathbin{\wedge}}
\newcommand{\join}{\mathbin{\vee}}
\newcommand{\alg}[1]{\langle #1 \rangle}
\newcommand{\ext}[1]{\mathbf{R}^S( #1 )}
\newcommand{\extu}[1]{\mathrm{R}^S( #1 )}
\newcommand{\andd}{\mathbin{\&}}
\renewcommand{\phi}{\varphi}
\renewcommand{\epsilon}{\varepsilon}
\newcommand{\hm}{\mathbb{H}}
\newcommand{\iso}{\mathbb{I}}
\newcommand{\pr}{\mathbb{P}}
\newcommand{\pu}{\mathbb{P}_U}
\newcommand{\sub}{\mathbb{S}}
\newcommand{\vr}{\mathbb{V}}
\newcommand{\m}{\bf}
\newcommand{\V}{\mathsf{V}}
\newcommand{\K}{\mathsf{K}}
\providecommand*{\leftmodels}{%
  \mathrel{%
    \mathpalette\@leftmodels\models
  }%
}
\newcommand*{\@leftmodels}[2]{%
  \reflectbox{$\m@th#1#2$}%
}
\DeclareMathOperator*{\bang}{!}
\DeclareMathOperator*{\whynot}{?}
\newcommand{\fm}{\mathsf{Fm}}
\newcommand{\eq}{\mathsf{Eq}}
\newcommand{\Var}{\mathsf{Var}}
\newcommand{\var}{\mathsf{var}}
\newcommand{\fle}{\m {FL}_e}
\newcommand{\cfle}{\vdash_{\m {FL}_e}}
\newcommand{\crle}{\vdash_{\m {RL}_e}}
\newcommand{\cll}{\vdash_{\m {LL}}}
\newcommand{\cmall}{\vdash_{\m {MALL}}}
\newcommand{\cflep}{\vdash_{\m {FL}_e}^P}
\newcommand{\crlep}{\vdash_{\m {RL}_e}^P}
\newcommand{\cllp}{\vdash_{\m {LL}}^P}
\newcommand{\cmallp}{\vdash_{\m {MALL}}^P}
\newcommand{\cfles}{\vdash_{\m {FL}_e}^{P+7}}
\newcommand{\crles}{\vdash_{\m {RL}_e}^{P+7}}
\newcommand{\clls}{\vdash_{\m {LL}}^{P+7}}
\newcommand{\cmalls}{\vdash_{\m {MALL}}^{P+7}}
\newcommand{\RL}{\mathsf{RL}}
\newcommand{\PRL}{\mathsf{PRL}}
\newcommand{\GA}{\mathsf{A}}
\newcommand{\GAM}{\mathsf{G}}
\newcommand{\prp}[1]{{\sf #1}}
\journal{}
\begin{document}

\begin{frontmatter}

\title{Interpolation in Linear Logic and Related Systems}

\author[1]{Wesley Fussner}
\ead{fussner@cs.cas.cz}

\author[2]{Simon Santschi}
\ead{simon.santschi@unibe.ch}

\affiliation[1]{organization={Institute of Computer Science, Czech Academy of Sciences}, country={Czech Republic}}
\affiliation[2]{organization={Mathematical Institute, University of Bern}, country={Switzerland}}


\begin{abstract}
We prove that there are continuum-many axiomatic extensions of the full Lambek calculus with exchange that have the deductive interpolation property. Further, we extend this result to both classical and intuitionistic linear logic as well as their multiplicative-additive fragments. None of the logics we exhibit have the Craig interpolation property, but we show that they all enjoy a guarded form of Craig interpolation. We also exhibit continuum-many axiomatic extensions of each of these logics without the deductive interpolation property.
\end{abstract}

\begin{keyword}
interpolation property \sep amalgamation property \sep linear logic \sep substructural logics  \MSC[2020]  03B47 \sep 03C40 \sep 03C05 	
\end{keyword}

\end{frontmatter}

\section{Introduction}

This paper is part of a larger effort to understand interpolation in substructural logics. The latter has been studied extensively in special cases, and complete classifications of substructural logics with various interpolation properties are known for some of these. For instance, Maksimova has shown that there are just $7$ consistent superintuitionsitic logics with the deductive interpolation property (for short, \prp{DIP}) and given concrete axiomatizations of them \cite{Mak77}. On the other hand, the results of \cite{DiNLe00} entail a complete classification of axiomatic extensions of \L{}ukasiewicz logic with the \prp{DIP}, of which there are $\aleph_0$. More generally, a line of work on H\'ajek's basic fuzzy logic shows that there are uncountably many axiomatic extensions of the latter without the \prp{DIP} and offers some partial classifications \cite{Mon06,AB21,FM22}. All of the aforementioned logics are extensions of the full Lambek calculus with the exchange rule $\fle$, and this study's original aim was to ascertain whether $\fle$ has uncountably many axiomatic extensions with the \prp{DIP}. We answer this question in the positive not only for $\fle$, but also for classical and intuitionistic linear logic and their multiplicative-additive fragments. We further show that none of the uncountably many logics we identify have the Craig interpolation property (for short, \prp{CIP}), but that all of them have a guarded variant of the \prp{CIP}. Because the existing literature suggests that interpolation is a relatively rare property among non-classical logics generally and substructural logics in particular, these results are rather surprising. They contribute not just to the study of the particular logics considered here, but also to our understanding of interpolation on a broader level.

Substructural logics comprise a diverse family of resource-sensitive logics, and are often presented in terms of a Gentzen-style sequent calculus called the \emph{full Lambek calculus}, depicted in Figure~\ref{fig:FL}. The full Lambek calculus lacks the basic structural rules (viz. weakening, contraction, and exchange), giving it the flexibility to model many forms of reasoning where these rules may not be valid. The most prominent logics modeled in this way arise as axiomatic extensions of the full Lambek calculus plus the \emph{exchange rule}:
\begin{center}
\mbox{
\infer[(e)]{\Gamma,\alpha,\beta,\Delta\Rightarrow \varphi}{
\Gamma,\beta,\alpha,\Delta\Rightarrow\varphi}
}
\end{center}
The rules ($\cdot l$) and ($\cdot r$) codify that comma separator of the calculus is internalized as the connective $\cdot$, typically called \emph{strong conjunction} or \emph{fusion}. The rule (e) captures the commutativity of this connective, and the full Lambek calculus with exchange $\fle$ encompasses many prominent logical systems of independent origin. These include the most thoroughly studied relevance logics \cite{AB75}, H\'ajek's basic fuzzy logic and its generalizations \cite{Haj98,Fus22,FZ2021,FU2019}, constructive logic with strong negation \cite{SV08,BC2010}, and both the classical and intuitionistic propositional logics. In recent years, axiomatic extensions of $\fle$ have been studied extensively and effectively using the tools of algebraic logic; see \cite{GJKO07} for an overview.

\begin{figure}[t]

\fbox{\begin{minipage}{\textwidth}
\begin{center}
\fontsmall
\medskip
$ 
\begin{array}{ccc} 
\infer[(cut)]{\Gamma,\Sigma,\Delta\Rightarrow \Pi}{
\Sigma \Rightarrow \alpha & 
 \Gamma,\alpha, \Delta\Rightarrow \Pi}
 &
\infer[(id)]{\alpha\Rightarrow \alpha}{} &
\infer[(1 r)]{\Rightarrow 1}{} \\[2em]
\infer[(\cdot l)]{\Gamma,\alpha\cdot\beta , \Delta\Rightarrow \Pi
   }{\Gamma,\alpha,\beta,\Delta\Rightarrow \Pi} &
\infer[(\cdot r)]{\Gamma,\Delta\Rightarrow \alpha\cdot\beta}{
 \Gamma \Rightarrow \alpha & 
 \Delta\Rightarrow \beta} &
\infer[(1 l)]{\Gamma,1,\Delta\Rightarrow \Pi}{
\Gamma,\Delta\Rightarrow \Pi}  \\[2em]
\infer[(\ld l)]{\Gamma,\Sigma,\alpha\ld\beta,\Delta\Rightarrow
  \Pi}{\Sigma \Rightarrow \alpha & \Gamma,\beta,\Delta\Rightarrow \Pi} &
\infer[(\ld r)]{\Gamma\Rightarrow \alpha\ld\beta}{\alpha,
     \Gamma\Rightarrow \beta} &
\infer[(0 l)]{\Gamma\Rightarrow 0}{\Gamma\Rightarrow \ }  \\[2em]
\infer[(\rd l)]{\Gamma,\beta\rd\alpha,\Sigma,\Delta\Rightarrow \Pi}{
   \Sigma \Rightarrow \alpha & 
 \Gamma,\beta,\Delta\Rightarrow \Pi} &
\infer[(\rd r)]{\Gamma\Rightarrow \beta/\alpha}{
\Gamma,\alpha\Rightarrow \beta} &
\infer[(0 r)]{0 \Rightarrow \ }{}  \\[2em]
\infer[(\vee l)]{\Gamma,\alpha\vee\beta,\Delta\Rightarrow \Pi}{
\Gamma,
   \alpha,\Delta\Rightarrow \Pi  &  \Gamma,\beta,\Delta\Rightarrow \Pi} &
\infer[(\vee r_1)]{\Gamma\Rightarrow \alpha\vee\beta}{
\Gamma\Rightarrow \alpha} &
\infer[(\vee r_2)]{\Gamma\Rightarrow \alpha\vee\beta}{
\Gamma\Rightarrow \beta}  \\[2em]
\infer[(\wedge l_1)]{ \Gamma,\alpha\wedge\beta,\Delta\Rightarrow \Pi}{
 \Gamma,\alpha,\Delta\Rightarrow \Pi} &
\infer[(\wedge l_2)]{\Gamma,\alpha\wedge\beta,\Delta\Rightarrow \Pi}{
 \Gamma,\beta,\Delta\Rightarrow \Pi} &
\infer[(\wedge r)]{\Gamma\Rightarrow \alpha\wedge\beta}{
\Gamma\Rightarrow \alpha
    & \Gamma\Rightarrow \beta} 
\end{array}
$
\medskip
\end{center}
\end{minipage}}
\caption{The full Lambek calculus.}
\label{fig:FL}
\end{figure}

Linear logic was introduced by Girard in \cite{girard87} as a logical foundation for parallel computation. In the intervening years, it has been extensively studied and has attracted applications spanning constraint programming \cite{FRS01}, logic programming \cite{Mil96}, and other areas. Its multiplicative-additive fragment ${\bf MALL}$ fits within the framework of the full Lambek calculus expanded by additional constants (see \cite[p.~109]{GJKO07}). Full linear logic ${\bf LL}$ expands the basic language of $\fle$ not only with additional constants and an involutive negation but also by the \emph{exponentials} $\bang$ and $\whynot$, which can be viewed as certain modal operators; see~\cite{girard87,Avr88}. Because of its motivations within the proofs-as-programs paradigm, linear logic has typically been presented and studied in proof-theoretic terms. This approach emphasizes proofs themselves as opposed to the notion of provability, and thus does not employ the framework of Tarskian consequence relations. However, Avron in \cite{Avr88} gave a Hilbert-style proof system for linear logic and studied the corresponding consequence relation, in particular introducing an algebraic semantics. Later, Blok and Pigozzi introduced their powerful framework for the algebraizability of logics in \cite{BP89}, and Aglian\`{o} proved in the 1990s that the consequence relation of linear logic is algebraizable in Blok and Pigozzi's sense.\footnote{We note that Aglian\`{o}'s original work on linear logic was never published and remained folklore among algebraic logicians for more than two decades, but has recently been made available in \cite{AglLL}.}  Thanks to the differing motivations between linear logicians and researchers working within the paradigm of consequence relations, subsequent algebraic studies of linear logic have been sparse. Most semantic studies of linear logic have focused on categorical methods or phase spaces, and have not usually employed the Blok-Pigozzi framework (but cf. \cite{GP23}).

In the present paper, we use algebraic techniques to study interpolation for the consequence relations associated to $\fle$, ${\bf LL}$, and several related systems. In particular, our main result (Theorem~\ref{t:main}) gives that each of these systems has continuum-many axiomatic extensions with the deductive interpolation property. Our results rest on algebraization for the aforementioned deductive systems \cite{Galatos2006,AglLL}, as well as the fact that each of these deductive systems has the deductive interpolation property if and only if its associated class of algebraic models has the amalgamation property \cite{CP99}. Accordingly, we construct continuum-many varieties (AKA equational classes) of the respective algebraic models that have the amalgamation property. We obtain these varieties by constructing generating algebras for them from appropriately chosen abelian groups. Our construction of these algebraic models is tailored to exploit the fact that the category of abelian groups is replete with injective objects (see Theorem~\ref{l:embed-inj}), which is the key to getting the amalgamation property for the varieties we identify.

The aforementioned results demonstrate that $\fle$, ${\bf LL}$, and multiplicative-additive linear logic are rich in extensions with the \prp{DIP}. To provide further context, we show in Theorem~\ref{t:noDIP} that each of these logics also has continuum-many axiomatic extensions without the \prp{DIP}. This was known previously in the case of $\fle$ (see \cite{Mak77}), but the result is new for linear logic and its multiplicative-additive fragment. Together with the previously articulated positive results, this demonstrates that $\fle$, ${\bf LL}$, and several related systems provide environments where both the presence and absence of interpolation is abundant.

The methods we use rely crucially on the tools of algebraic logic, as well as the special features of abelian groups. Because of this, we do not expect that our results can be easily reproduced by other techniques, in particular proof-theoretic ones. We thus regard the present study not only as a contribution to the study of interpolation in substructural logics, but also as illustrating the potential of algebraic tools in linear logic. Commensurately, we hope that this study is of broad interest beyond specialists in algebraic logic. In order to make this work as widely accessible as possible---and because Aglian\`{o}'s results on algebraic linear logic have only recently become available---we provide an especially thorough discussion of preliminaries in Section~\ref{sec:deductive systems}. We hope that this provides a guide allowing general logicians to appreciate the key features of the arguments, as well as suggest additional avenues for the application of algebraic techniques.

\section{Linear logic and algebraic logic}\label{sec:deductive systems}
Our investigation is based in algebraic substructural logic, regarding which we presently recall some pertinent facts. Our treatment aims to compactly present the highlights of algebraic substructural logics to non-specialists, while also specifying the classes of algebraic structures we consider. For further information on algebraic substructural logic, we refer to \cite{GJKO07}. We direct the reader to \cite{F2016} for background on abstract algebraic logic generally, and for preliminaries on universal algebra see \cite{BS81}.

\subsection{Deductive systems and the logics in question} As usual, we fix a countably infinite set of propositional variables $\Var$. Given an algebraic language $\mathcal{L}$ (whose members serve as logical connectives), we denote the collection of formulas constructed from $\mathcal{L}$ and variables in $\Var$ by $\fm_\mathcal{L}$. If $\varphi\in\fm_\mathcal{L}$, then $\var(\varphi)$ stands for the collection of variables appearing in the formula $\varphi$; we extend this notation to any collection $\Gamma\subseteq\fm_\mathcal{L}$ by setting $\var(\Gamma) = \bigcup\{\var(\varphi) : \varphi\in\Gamma\}$. A \emph{substitution} is an endomorphism of the absolutely free algebra ${{\m \fm}_\mathcal{L}}$ over the language $\mathcal{L}$. Further, a \emph{consequence relation} over $\mathcal{L}$ is a relation $\vdash\;\subseteq \mathcal{P}(\fm_\mathcal{L})\times\fm_\mathcal{L}$ from sets of formulas to formulas satisfying, for any $\Gamma\cup \Pi\cup \{ \varphi \}\subseteq\fm_\mathcal{L}$:
\begin{enumerate}
\item If $\varphi\in\Gamma$, then $\Gamma\vdash\varphi$ (\emph{reflexivity});
\item If $\Gamma\vdash \varphi$ and $\Gamma\subseteq\Pi$, then $\Pi\vdash\varphi$ (\emph{monotonicity});
\item If $\Gamma\vdash \varphi$ and $\Pi\vdash\psi$ for every $\psi\in\Gamma$, then $\Pi\vdash\varphi$ (\emph{transitivity});
\item If $\Gamma\vdash \varphi$, then $\sigma[\Gamma]\vdash\sigma(\varphi)$ for every substitution $\sigma$ (\emph{structurality}).
\end{enumerate}
A \emph{deductive system} is a consequence relation $\vdash$ that is \emph{finitary} in the sense that:
\begin{enumerate}
\item[5.] If $\Gamma\vdash\varphi$, then there exists a finite subset $\Gamma'\subseteq\Gamma$ such that $\Gamma'\vdash\varphi$.
\end{enumerate}
If $\vdash$ is a consequence relation over the language $\mathcal{L}$, a formula $\varphi\in\fm_\mathcal{L}$ is called a
 \emph{theorem} of $\vdash$ provided that $\vdash\varphi$. If $\Gamma,\Sigma\subseteq\fm_\mathcal{L}$ and $\vdash$ is a consequence relation over $\mathcal{L}$, then we write $\Gamma\vdash\Sigma$ provided that $\Gamma\vdash\varphi$ for every $\varphi\in\Sigma$. Moreover, we write $\Gamma\dashv \, \vdash \Sigma$ if $\Gamma\vdash\Sigma$ and $\Sigma\vdash\Gamma$.

The term `logic' has several precise meanings in the literature, sometimes referring to deductive systems and at other times to sets of formulas, particular proof-theoretic presentations of these, and so forth. These differing levels of description come with varying degrees of specification: A deductive system may be presented by many different proof-theoretic formalisms, and different deductive systems may have the same theorems. Due to connection between computations and proofs, the literature on linear logic typically conceptualizes `logic' as referring to particular calculi. In this paper, we adopt a more consequence-driven perspective and use the term \emph{logic} to mean `deductive system'.

Suppose that $\vdash$ is a logic over a language $\mathcal{L}$. A logic $\vdash^\ast$ over $\mathcal{L}$ is an \emph{axiomatic extension} of $\vdash$ if there exists a set of formulas $\Sigma\subseteq\fm_\mathcal{L}$, where $\Sigma$ is closed under substitutions, such that 
\[
\Gamma \vdash^\ast \phi \iff \Gamma, \Sigma \vdash \phi.
\]
Intuitively, an axiomatic extension of a logic $\vdash$ simply arises from adjoining new axiom schemes to $\vdash$ and closing the resulting set under consequence. We note that each axiomatic extension of a logic $\vdash$ over $\mathcal{L}$ is a subset of $\mathcal{P}(\fm_\mathcal{L})\times\fm_\mathcal{L}$, and the collection of axiomatic extensions of $\vdash$ forms a lattice under inclusion.

We are concerned with certain deductive systems over supersets of the basic language $\mathcal{RL}=\{\meet,\join,\cdot,\to,1\}$, where $\meet,\join,\cdot,\to$ are binary and $1$ is nullary. In particular, we use $0,\bot,\top$ to denote nullary function symbols/connectives and $\bang,\whynot$ to denote unary ones; we will consider several deductive systems over languages $\mathcal{RL}\cup S$, where $S\subseteq \{0,\bot,\top,\bang,\whynot\}$.

Figure~\ref{fig:calculus} depicts Avron's Hilbert-style calculus for classical linear logic \cite{Avr88}. Here $\neg \alpha$ abbreviates $\alpha \to 0$ and as usual the connectives $\meet,\join,\cdot,\bang,\neg$ bind stronger than $\to$; the connective $\whynot$ is definable in this system as an abbreviation for $\neg\bang\neg$, so need not be included in the language. Considered over the language $\mathcal{RL}\cup\{0\}$, the calculus defined by (A1)--(A13) together with (mp) and (adj) axiomatizes the deductive system $\cfle$ of the full Lambek calculus with exchange; its $0$-free fragment $\crle$ is given by working over the language $\mathcal{RL}$ (cf. \cite{GJKO07}).
\footnote{Spelling this out, suppose that $\Gamma \cup \{ \varphi \}$ is a set of formulas in the language. Then the sequent $\Rightarrow\varphi$ is provable from the set of sequents $\{\Rightarrow \psi : \psi \in \Gamma \}$ in the full Lambek calculus (see Figure~\ref{fig:FL}) if and only if $\varphi$ is provable from $\Gamma$ in the Hilbert calculus defined by (A1)--(A13) together with (mp) and (adj).} Adding (A$\bot$) and (A$\top$) and working over $\mathcal{RL}\cup\{0,\bot,\top\}$, we obtain a calculus for the full Lambek calculus with bounds. Further adding (A0)--(Con) yields the deductive system $\cmall$ of classical multiplicative-additive linear logic. The deducibility relation $\cll$ for full classical linear logic is specified by further adding the axiom schemes ($\bang$w)--($\bang$4) and the rule (nec).

Different notational conventions appear in the literature, with some especially significant differences between the substructural logic and linear logic communities. Here we primarily use notation that is common among substructural logicians, but we keep Girard's notation from \cite{girard87} for $\bang$ and $\whynot$. We differ from Avron \cite{Avr88} on this point; observing that $\bang$ is tantamount to the necessity operator of an S4-style modal logic, he uses $\Box$ in place of $\bang$. However, the naming convention we have adopted for the axiom schemes in Figure~\ref{fig:calculus} reflects the naming conventions from modal logic. Figure~\ref{fig:notation} indicates how to toggle between the linear logicians' notation from \cite{girard87} and our notation.
\begin{figure}[t]
\fbox{
\begin{minipage}[t]{\textwidth}
\fontsmall
\begin{center}
{\bf \fontnormal Axiom schemes}\\
\setlength{\columnsep}{0 cm}
\fontsmall
\begin{multicols}{2}
\setlength{\itemsep}{0 cm}
    \begin{itemize}[align=left]
	\item[(A1)] $\alpha \to \alpha$
	\item[(A2)] $\alpha \meet \beta \to \alpha$
	\item[(A3)] $\alpha \meet \beta \to \beta$
	\item[(A4)] $\alpha \to \alpha \join \beta$
	\item[(A5)] $\beta \to \alpha \join \beta$
	\item[(A6)]  $(\alpha \to \beta ) \to ((\beta \to \gamma) \to (\alpha \to  \gamma)$
	\item[(A7)] $(\alpha \to (\beta \to \gamma)) \to (\beta \to (\alpha \to \gamma))$
	\item[(A8)] $(\alpha \to \beta) \meet (\alpha \to \gamma) \to ( \alpha \to \beta \meet \gamma)$
	\item[(A9)] $(\alpha \to \gamma) \meet (\beta \to \gamma) \to (\alpha \join \beta \to \gamma)$
	\item[(A10)] $\alpha \to ( \beta \to \alpha \cdot \beta)$
	\item[(A11)] $(\alpha \to (\beta \to \gamma)) \to (\alpha \cdot \beta \to \gamma)$
	\item[(A12)] $1$
	\item[(A13)] $1 \to (\alpha \to \alpha)$
	\item[(A$\bot$)] $\alpha \to \top$
	\item[(A$\top$)] $\bot \to \alpha$
	\item[(A0)] $\neg 0$
	\item[(NC)] $\alpha \to (\neg\alpha \to 0)$
	\item[(DN)] $\neg\neg \alpha \to \alpha$
	\item[(Con)] $(\alpha \to \neg\beta) \to (\beta \to \neg \alpha)$
	\item[($\bang$w)] $\beta \to (\bang \alpha \to \beta)$
	\item[($\bang$i)] $(\bang \alpha \to (\bang \alpha \to \beta)) \to (\bang \alpha \to \beta)$
	\item[($\bang$K)] $\bang(\alpha \to \beta) \to (\bang \alpha \to \bang \beta)$
	\item[($\bang$T)] $\bang \alpha \to \alpha$
	\item[($\bang$4)] $\bang \alpha \to \bang\bang \alpha$
\end{itemize}
\end{multicols}
\end{center}

\begin{center}
{\bf \fontnormal Rules of inference}\\
\;\\

\fontsmall
 \begin{prooftree} \hypo{\alpha}\hypo{\alpha \to \beta}\infer2[(mp)]{\beta} \end{prooftree} \qquad \begin{prooftree} \hypo{\alpha}\hypo{\beta}\infer2[(adj)]{\alpha \meet \beta} \end{prooftree} \qquad \begin{prooftree} \hypo{\alpha}\infer1[(nec)]{\bang\alpha} \end{prooftree} 
\end{center}
\end{minipage}}
\caption{A Hilbert-style calculus for linear logic and related systems}
\label{fig:calculus}
\end{figure}

\begin{figure}[t]
\begin{center}
\fbox{
\begin{minipage}[t]{6cm}
\begin{tabular}{ccc}
\textbf{Girard's Notation}  &  & \textbf{Our Notation} \\
\hline
$\&$ & & $\meet$ \\
$\oplus$ & &$\join$ \\
$\multimap$ & & $\to$ \\
$( \, )^\bot$& & $\neg$ \\
$\otimes$ & & $\cdot$ \\
$1,\bot$ & &$1,0$ \\
$0, \top$ & &$\bot,\top$
\end{tabular}
\end{minipage}}
\end{center}
\caption{Correspondence between different notational conventions}
\label{fig:notation}
\end{figure}

In general, substructural logics with exchange only have local deduction theorems; see \cite{Galatos2006}. This is true, for example, of both $\cfle$ and $\cmall$. In contrast, $\cll$ has an explicit deduction theorem.

\begin{thm}[Avron's Deduction Theorem (\cite{Avr88})]\label{t:Avron}
Let $\Gamma\cup\{\varphi,\psi\}$ be a set of formulas in the language of linear logic. Then:
\begin{center}
$\Gamma,\varphi\cll \psi \iff \Gamma\cll \bang\varphi\to \psi$.
\end{center}
\end{thm}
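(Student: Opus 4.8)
The plan is to establish the biconditional in two directions, with the left-to-right implication — the deduction theorem proper — handled by induction on the length of a Hilbert-style derivation witnessing $\Gamma,\varphi\cll\psi$. The governing intuition is that the exponential $\bang$ is an S4-style modality whose prefixed formulas may be freely duplicated and discarded: the axioms ($\bang$w) and ($\bang$i) make $\bang\varphi$ subject to weakening and contraction, while ($\bang$K), ($\bang$T), and ($\bang$4) supply exactly the normal-modal apparatus of S4. Consequently the statement is closely analogous to the global deduction theorem for S4, transplanted into the resource-sensitive setting of linear logic, and each rule of Figure~\ref{fig:calculus} will be matched against a corresponding property of $\bang$.

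The converse implication is immediate. Assuming $\Gamma\cll\bang\varphi\to\psi$, monotonicity yields $\Gamma,\varphi\cll\bang\varphi\to\psi$; reflexivity gives $\Gamma,\varphi\cll\varphi$, whence (nec) produces $\Gamma,\varphi\cll\bang\varphi$, and one application of (mp) delivers $\Gamma,\varphi\cll\psi$.

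For the forward direction I would fix a derivation of $\psi$ from $\Gamma\cup\{\varphi\}$ and argue by induction on its length that $\Gamma\cll\bang\varphi\to\chi$ holds for every formula $\chi$ occurring in it. The non-rule cases are routine: if $\chi$ is an axiom instance or a member of $\Gamma$, then $\Gamma\cll\chi$, and ($\bang$w) together with (mp) give $\Gamma\cll\bang\varphi\to\chi$; if $\chi$ is the distinguished hypothesis $\varphi$ itself, then $\bang\varphi\to\varphi$ is exactly ($\bang$T). The (adj) step is equally direct: from the induction hypotheses $\Gamma\cll\bang\varphi\to\chi_1$ and $\Gamma\cll\bang\varphi\to\chi_2$ one forms their conjunction via (adj) and applies (A8) to obtain $\Gamma\cll\bang\varphi\to(\chi_1\meet\chi_2)$.

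The two genuinely substructural steps are (mp) and (nec), and I expect the (nec) case to be the crux. For (mp), where $\psi$ follows from $\chi$ and $\chi\to\psi$, the induction hypotheses give $\Gamma\cll\bang\varphi\to\chi$ and $\Gamma\cll\bang\varphi\to(\chi\to\psi)$; permuting the latter with (A7) and composing with (A6) produces $\Gamma\cll\bang\varphi\to(\bang\varphi\to\psi)$, and here the contraction axiom ($\bang$i) is precisely what collapses the doubled antecedent to $\Gamma\cll\bang\varphi\to\psi$ — this is the point at which the argument would fail without an exponential, and it explains why the implication must be guarded by $\bang$. For (nec), where $\psi=\bang\chi$ arises from $\chi$, the induction hypothesis is $\Gamma\cll\bang\varphi\to\chi$; applying (nec) yields $\Gamma\cll\bang(\bang\varphi\to\chi)$, then ($\bang$K) with (mp) gives $\Gamma\cll\bang\bang\varphi\to\bang\chi$, and finally the S4 axiom ($\bang$4), composed via (A6), upgrades $\bang\varphi$ to $\bang\bang\varphi$ to conclude $\Gamma\cll\bang\varphi\to\bang\chi$. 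The main obstacle, and the reason the deduction theorem holds for $\cll$ but only in local form for $\cfle$ and $\cmall$, is exactly this interplay: one needs $\bang$ to commute with implication via ($\bang$K) and to be idempotent up to ($\bang$4), so that a necessitation performed under a hypothesis can be absorbed back into the $\bang$-guarded antecedent.
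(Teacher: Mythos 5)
Your proof is correct: both directions go through exactly as written, with the converse using the global form of (nec) and the forward direction being the standard induction on Hilbert derivations, where ($\bang$w), ($\bang$T), ($\bang$i), ($\bang$K), and ($\bang$4) handle precisely the cases you assign them. The paper itself gives no proof of this theorem (it is cited from Avron), but your argument is the standard one underlying that citation, so nothing further is needed.
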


\begin{remark}
Clearly, Avron's Deduction Theorem also holds for arbitrary axiomatic extensions of classical linear logic. This may also be proven semantically by applying the algebraization results summarized in Section~\ref{sec:algebraization}.
\end{remark}


\subsection{Algebraization}\label{sec:algebraization}
We will consider algebraic counterparts of the deductive systems just introduced. We first discuss the pertinent classes of algebraic structures, and subsequently discuss their connection to the deductive systems $\cll$, $\cmall$, $\cfle$, and $\crle$.

Given a class $\sf K$ of similar algebras, we denote by $\hm(\mathsf{K})$, $\sub(\mathsf{K})$, $\pr(\mathsf{K})$, $\pu(\mathsf{K})$, and $\iso(\mathsf{K})$ its closure under taking homomorphic images, subalgebras, products, ultraproducts, and isomorphic images, respectively. We recall that a \emph{variety} is a class of similar algebras modeling a given collection of equations (formally defined below). A class $\sf K$ of similar algebras is a variety if and only if $\sf K$ is closed under $\hm$, $\sub$, and $\pr$. In fact, if $\sf K$ is any class of similar algebras, the least variety containing $\sf K$ coincides with $\hm\sub\pr(\mathsf{K})$. We denote the variety $\hm\sub\pr(\mathsf{K})$ generated by $\sf K$ by $\vr(\mathsf{K})$. 
Recall also that a  class  $\sf K$ of similar algebras is called a \emph{quasivariety} if it axiomatized by a set of quasi-equations or, equivalently, if $\iso\sub\pr\pu(\mathsf{K}) = \mathsf{K}$.

A \emph{commutative residuated lattice} is an algebra of the form $\alg{A,\meet,\join,\cdot,\to 1}$, where $\alg{A,\meet,\join}$ is a lattice, $\alg{A,\cdot,1}$ is a commutative monoid, and for all $a,b,c\in A$,
\[
a\cdot b\leq c \iff a\leq b\to c,
\]
where $\leq$ is the partial order corresponding to the lattice operations (that is, $a\leq b$ if and only if $a\meet b=a$). Note that this last condition can be replaced by a finite set of identities, so commutative residuated lattices form a finitely axiomatizable variety. A \emph{pointed commutative residuated lattice} is a commutative residuated lattice with an extra constant $0$ added to its signature.

When the underlying poset of a (pointed) commutative residuated lattice is bounded, we often include constant symbols $\bot$ and $\top$ in the signature, respectively denoting the least and greatest elements of the poset. We refer to the resulting structure in this enriched signature as a \emph{bounded (pointed) commutative residuated lattice}. 

An \emph{involution} on a commutative residuated lattice $\alg{A,\meet,\join,\cdot,\to 1}$ is a unary operation $\neg$ on $A$ such that, for all $a,b\in A$, $\neg\neg a = a$ and $a\to\neg b = b\to\neg a$. If $\neg$ is an involution on a commutative residuated lattice $\alg{A,\meet,\join,\cdot,\to, 1}$, then we often abbreviate $\neg 1$ by $0$. In this event we have $a=(a\to 0)\to 0$ for all $a\in A$. We call an arbitrary constant $0$ satisfying the latter identity a \emph{negation constant}. If $0$ is a negation constant in some commutative residuated lattice $\alg{A,\meet,\join,\cdot,\to,0}$, then one may define an involution $\neg$ by setting $\neg a = a\to 0$. Consequently, the expansion of a commutative residuated lattice $\alg{A,\meet,\join,\cdot,\to,0}$ by an involution is term-equivalent to its expansion by the negation constant $0=\neg 1$. Either of these term-equivalent expansions is called an \emph{involutive commutative residuated lattice}. Note that this terminology introduces ambiguity, but presents no serious problems to this study. We call a bounded involutive commutative residuated lattice an \emph{A-algebra}.\footnote{The `A' in A-algebra stands for both Aglian\`{o} and Avron.}

An algebra $\alg{A,\meet,\join,\cdot,\to,1,0,\bot,\top,\bang}$ is a \emph{girale} if its $\alg{A,\meet,\join,\cdot,\to,1,0,\bot,\top}$ is an A-algebra and $\bang$  is a unary operation on $A$ such that for all $a,b\in A$:
\begin{enumerate}[label = \textup{(G\arabic*)}]
\item $\bang 1 = 1$.
\item $\bang a \leq a\meet 1$.
\item $\bang a\cdot\bang b = \bang (a\meet b)$.
\item $\bang\bang a = \bang a$.
\end{enumerate}

We denote by $\RL$, $\PRL$, $\GA$, and $\GAM$, the varieties of commutative residuated lattices, pointed commutative residuated lattices, A-algebras, and girales, respectively. Each of these comprises an arithmetical variety with the congruence extension property.\footnote{A variety is called \emph{arithmetical} if it is both congruence distributive and congruence permutable.}

The classes of algebraic structures just introduced are pertinent to our study because they give algebraic models for the deductive systems of Section~\ref{sec:deductive systems}. To say what this means precisely, we must make formal several notions. Given an algebraic language $\mathcal{L}$, an \emph{equation} over $\mathcal{L}$ is a pair $\alg{s,t}\in \fm_\mathcal{L}^2$. We will write an equation $\alg{s,t}$ as $s\approx t$, and denote by $\eq_\mathcal{L}$ the collection of equations constructed from $\mathcal{L}$ along with the variables from $\Var$. If $\m A$ is an algebra in the language $\mathcal{L}$, an \emph{assignment} into ${\m A}$ is a homomorphism $h$ from the absolutely free algebra over $\mathcal{L}$ to $\m A$. For a class ${\sf K}$ of algebras in the language $\mathcal{L}$, we define a relation $\models_\K\;\subseteq\mathcal{P}(\eq_\mathcal{L})\times\eq_\mathcal{L}$ from sets of equations to equations by
\begin{align*}
E\models_\K (u\approx w) \iff &\text{ For each }{\m A}\in\K\text{ and each assignment }h\text{ into }{\m A},\\
& h(u)=h(w) \text{ whenever }h(s)=h(t)\text{ for all }(s\approx t)\in E.
\end{align*}
The relation $\models_\K$ is called the \emph{equational consequence relation} of $\K$. We adopt all the expected notation, writing $E\leftmodels \models_{\sf K} S$ for $E\models_\K S$ and $S\models_\K E$, and so on.

A deductive system $\vdash$ over $\mathcal{L}$ is called \emph{algebraizable} (see \cite{BP89}) if there exist a finite set of equations $\tau(x)$ in one variable $x$, a finite set of formulas $\Delta(x,y)$ in two variables $x,y$, and a quasivariety $\sf K$ such that
\begin{align*}
\Gamma \vdash \phi &\iff \tau[\Gamma] \models_{\sf K} \tau(\phi) \\
\Theta \models_{\sf K} \epsilon \approx \delta &\iff \Delta[\Theta] \vdash \Delta(\epsilon,\delta) \\
\phi  & \  \dashv \, \vdash \Delta[\tau(\phi)] \\
\epsilon \approx \delta & \  \leftmodels \models_{\sf K} \tau[\Delta(\epsilon,\delta)]
\end{align*}
for every set of formulas $\Gamma \cup \{\varphi\}\subseteq\fm_\mathcal{L}$ and set of equations $\Theta \cup \set{\epsilon \approx \delta}\subseteq\eq_\mathcal{L}$. In this situation, the equations $\tau(x)$ are called \emph{defining equations}, the formulas $\Delta(x,y)$ are called \emph{equivalence formulas}, and $\K$ is called the \emph{equivalent algebraic semantics} of $\vdash$. We say that a deductive system $\vdash$ is \emph{strongly algebraizable} when it is algebraizable and its equivalent algebraic semantics is a variety.

When $\K$ is the equivalent algebraic semantics of a deductive system $\vdash$, the defining equations and equivalence formulas witness mutually inverse translations between $\vdash$ and the equational consequence of $\K$. This creates a powerful link between $\vdash$ and $\K$ that facilitates a back-and-forth transfer of properties between the two. The following well-known result is one such link that we will exploit in the present work.
\begin{prop}[{\cite[Corollary 3.40]{F2016}}]\label{prop:isomorphism}
Let $\vdash$ be an strongly algebraizable deductive system and let $\V$ be its equivalent algebraic semantics. Then the lattice of axiomatic extensions of $\vdash$ is dually isomorphic to the lattice of subvarieties of $\V$.
\end{prop}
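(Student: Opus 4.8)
The plan is to exhibit an explicit order-reversing bijection between the axiomatic extensions of $\vdash$ and the subvarieties of $\V$, built directly from the defining equations $\tau$ and equivalence formulas $\Delta$ witnessing algebraizability. I would first reduce the lattice of axiomatic extensions to a more tractable object. An axiomatic extension $\vdash^\ast$ given by a substitution-closed axiom set $\Sigma$ is determined by its theory $T = \{\phi : \Sigma \vdash \phi\}$: by structurality (clause 4) together with monotonicity, $T$ is a substitution-closed, deductively closed $\vdash$-theory, and the equivalence $\Gamma \vdash^\ast \phi \iff \Gamma, T \vdash \phi$ recovers $\vdash^\ast$ from $T$. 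Conversely every substitution-closed $\vdash$-theory arises in this way, and $\vdash^\ast_1 \subseteq \vdash^\ast_2$ holds iff $T_1 \subseteq T_2$. Thus the lattice of axiomatic extensions of $\vdash$ is isomorphic to the lattice of substitution-closed $\vdash$-theories under inclusion, and it suffices to produce a dual isomorphism between the latter and the subvarieties of $\V$.

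Next I would define the two transports. Sending a substitution-closed theory $T$ to $F(T) := \{\m A \in \V : \m A \models \tau(\phi) \text{ for all } \phi \in T\}$ yields a subvariety of $\V$, since it is cut out of the variety $\V$ by the additional equations $\tau[T]$. In the reverse direction, given a subvariety $\mathsf{W} \subseteq \V$, I would take $E := \{\epsilon \approx \delta : \mathsf{W} \models \epsilon \approx \delta\}$ and set $G(\mathsf{W})$ to be the $\vdash$-closure of $\Delta[E]$ (which is substitution-closed, since $E$ is closed under substitution, $\Delta$ commutes with substitutions, and the closure is structural). It is precisely here that strong algebraizability is used: because $\V$ is a \emph{variety}, every subvariety $\mathsf{W}$ equals $\{\m A \in \V : \m A \models E\}$, so $\mathsf{W}$ is recoverable from $E$ and the relative and absolute notions of equationally defined subclass coincide. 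Both $F$ and $G$ are visibly order-reversing: enlarging a theory adds defining equations and hence shrinks the associated variety, and symmetrically on the other side.

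The crux is to verify that $F$ and $G$ are mutually inverse, and here the two inverse-translation clauses of algebraizability do the work. For $G \circ F = \mathrm{id}$: given $T$, the equations valid in $F(T)$ are generated over $\V$ by $\tau[T]$, so $G(F(T))$ is the $\vdash$-closure of $\Delta[\tau[T]]$; but the clause $\phi \dashv\vdash \Delta[\tau(\phi)]$ shows each $\phi \in T$ is interderivable with $\Delta[\tau(\phi)]$, whence this closure equals $T$ itself. For $F \circ G = \mathrm{id}$: given $\mathsf{W}$ with relative theory $E$, the clause $\epsilon \approx \delta \leftmodels\models_\V \tau[\Delta(\epsilon,\delta)]$ shows each $\epsilon \approx \delta \in E$ is $\V$-equivalent to equations $\tau(\psi)$ with $\psi$ a theorem of $G(\mathsf{W})$, so $F(G(\mathsf{W}))$ and $\mathsf{W}$ satisfy the same equations over $\V$ and therefore coincide. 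The two forward translation clauses $\Gamma \vdash \phi \iff \tau[\Gamma] \models_\V \tau(\phi)$ and $\Theta \models_\V \epsilon \approx \delta \iff \Delta[\Theta] \vdash \Delta(\epsilon,\delta)$ are what guarantee that the generated theories and equational bases match exactly.

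The main obstacle I anticipate is bookkeeping rather than conceptual: one must check that $F$ and $G$ are well-defined independently of the chosen generating data (different axiomatizations $\Sigma$ of a single extension, or different equational bases of a single subvariety) and that the closures appearing in the two inverseness computations agree on the nose. This reduces to confirming that the $\tau$–$\Delta$ translations carry $\vdash$-interderivability to $\models_\V$-interderivability and back, which is exactly the content of the four algebraizability clauses (finitarity playing only an auxiliary role); the order-reversal and lattice-theoretic conclusions are then immediate.
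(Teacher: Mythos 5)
The paper does not prove this statement at all---it is quoted from \cite[Corollary 3.40]{F2016}---so there is no in-paper argument to compare against; your proposal is a correct reconstruction of the standard proof that the citation points to. Your reduction of axiomatic extensions to substitution-invariant theories, the two transports $F$ and $G$ built from $\tau$ and $\Delta$, and the use of the two inverse-translation clauses to show $G\circ F=\mathrm{id}$ and $F\circ G=\mathrm{id}$ are all sound, and you correctly isolate where strong algebraizability (i.e.\ $\V$ being a variety rather than merely a quasivariety) enters. The one step you state but do not justify is the identification of the equational theory of $F(T)$ with the $\models_{\V}$-closure of $\tau[T]$: the inclusion ``every equation valid in $F(T)$ is a $\models_{\V}$-consequence of $\tau[T]$'' is not automatic from the definitions and requires the Lindenbaum--Tarski/free-algebra argument (the quotient of the formula algebra by the congruence $\{(s,t):\tau[T]\models_{\V}s\approx t\}$ lies in $F(T)$ precisely because $\tau[T]$ is substitution-closed, and it separates non-consequences). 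This is standard relative Birkhoff completeness and does not constitute a gap, but it is the point a careful write-up would have to spell out.
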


Galatos and Ono showed in \cite{Galatos2006} that $\cfle$ and several related deductive systems are strongly algebraizable, and substructural logics have been studied quite extensively using these methods. Algebraic semantics have been deployed much less in linear logic. Avron introduced an algebraic semantics for linear logic in \cite{Avr88}, but did not demonstrate the mutual interpretability required of an equivalent algebraic semantics. In an unpublished manuscript (updated in \cite{AglLL}), Aglian\`{o} showed that $\cmall$ and $\cll$ are algebraizable and identified the varieties of A-algebras and girales in a term-equivalent signature.\footnote{We note that the modal nature of $\bang$ is key in the algebraization of $\cll$. Referring to Figure~\ref{fig:calculus}, the presence of the axiom scheme ($\bang$K) and the strong form of the necessitation rule (nec) are especially crucial; see the discussion of algebraization of the global consequence relation of normal modal logics in \cite[pp.~46--47]{BP89}.} The following theorem summarizes information regarding algebraizability that we will use in the sequel.

\begin{thm}\label{t:algebraizable}
Let $\delta(x) = x\meet 1 \approx 1$ and $\Delta(x,y) = \set{x\to y, y \to x}$.
\begin{enumerate}[label = \textup{(\roman*)}]
\item The system $\crle$ is strongly algebraizable with defining equation $\delta(x)$, equivalence formulas $\Delta(x,y)$, and equivalent algebraic semantics $\RL$.
\item The system $\cfle$ is strongly  algebraizable with defining equation $\delta(x)$, equivalence formulas $\Delta(x,y)$, and equivalent algebraic semantics $\PRL$.
\item  The system $\cmall$ is  strongly algebraizable with defining equation $\delta(x)$, equivalence formulas $\Delta(x,y)$, and equivalent algebraic semantics $\GA$.
\item  The system $\cll$ is strongly  algebraizable with defining equation $\delta(x)$, equivalence formulas $\Delta(x,y)$, and equivalent algebraic semantics $\GAM$.
\end{enumerate}
\end{thm}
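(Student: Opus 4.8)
The plan is to prove each of the four algebraizability claims by verifying the Blok--Pigozzi criteria directly, using the chosen defining equation $\delta(x) = x\meet 1 \approx 1$ and equivalence formulas $\Delta(x,y) = \set{x\to y, y\to x}$. Since parts (i) and (ii) are essentially the results of Galatos and Ono \cite{Galatos2006}, and parts (iii)--(iv) the results of Agliano \cite{AglLL}, the honest approach is twofold: first, to observe that the underlying fact is already available in the literature and cite it, and second, to indicate the uniform mechanism that makes all four cases work so that the reader sees why the same $\delta$ and $\Delta$ suffice across the hierarchy. I would open by recalling that a deductive system is algebraizable precisely when there exist $\tau$ and $\Delta$ satisfying the four biconditions in the definition, and that it is \emph{strongly} algebraizable when the equivalent algebraic semantics is a variety rather than merely a quasivariety. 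Because $\RL$, $\PRL$, $\GA$, and $\GAM$ were each declared above to be varieties (indeed arithmetical ones), establishing plain algebraizability with these classes as the equivalent algebraic semantics immediately upgrades to strong algebraizability.

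The core of the argument is the same in all four cases, so I would carry it out once in the richest relevant generality and then note that it specializes. First I would verify the intrinsic conditions on $\Delta$ that characterize an equivalence: reflexivity $\vdash \Delta(x,x)$ comes from (A1); symmetry $\Delta(x,y)\vdash\Delta(y,x)$ is trivial since $\Delta$ is a set; transitivity $\Delta(x,y),\Delta(y,z)\vdash\Delta(x,z)$ follows from (A6) together with (mp); and congruence with respect to each connective $f$ in the language must be checked, i.e. that $\Delta(x_1,y_1),\dots,\Delta(x_n,y_n)\vdash\Delta(f(\bar x),f(\bar y))$. Here the richer languages demand more work: beyond $\meet,\join,\cdot,\to$, one must handle the constants $0,\bot,\top$ (for which congruence is vacuous) and, crucially for $\cll$, the exponential $\bang$. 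The congruence step for $\bang$ is exactly where the modal axiom ($\bang$K) and the necessitation rule (nec) enter, matching the footnote's remark that these are essential; from $\Delta(x,y) = \set{x\to y,\, y\to x}$ one derives $\bang(x\to y)$ via (nec), then $\bang x\to\bang y$ via ($\bang$K) and (mp), and symmetrically, yielding $\Delta(\bang x,\bang y)$. Finally I would verify the last intrinsic condition $x \mathrel{\dashv\vdash} \Delta[\tau(x)]$, namely that $x$ is interderivable with $\set{(x\meet 1)\to 1,\; 1\to(x\meet 1)}$, which rests on (A12), (A13), and the defining behavior of $1$ as the monoid unit.

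With the four intrinsic conditions in hand, Blok--Pigozzi guarantees algebraizability, and the equivalent algebraic semantics is then the quasivariety determined by translating the calculus' axioms and rules through $\tau$ and $\Delta$; the remaining content is to identify this quasivariety with the stated variety in each case. For (i) and (ii) this identification with $\RL$ and $\PRL$ is the Galatos--Ono computation; for (iii) and (iv) it is Agliano's identification with $\GA$ and $\GAM$ in the term-equivalent signature, where one checks that the translated axiom schemes (A$\bot$), (A$\top$), (A0), (NC), (DN), (Con) cut out the bounded involutive structure of an A-algebra, and that the translated ($\bang$w)--($\bang$4) together with (nec) yield precisely the girale conditions (G1)--(G4). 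The main obstacle, and the step I would spend the most care on, is this last identification for the exponential: one must confirm that the Hilbert-style axioms for $\bang$ translate under $\tau$ and $\Delta$ exactly into (G1)--(G4) and no more, so that the resulting class is the variety $\GAM$ as defined rather than a strictly smaller or larger class. The modal flavor of $\bang$ means the necessitation rule is a genuine rule (not an axiom), so I would take particular care that its translation is handled correctly within the Blok--Pigozzi framework, as flagged in the footnote referencing the algebraization of the global consequence relation of normal modal logics in \cite{BP89}.
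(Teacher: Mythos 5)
The paper states this theorem without proof, presenting it as a summary of results from Galatos--Ono \cite{Galatos2006} (parts (i)--(ii)) and Agliano \cite{AglLL} (parts (iii)--(iv)), and your proposal takes essentially the same route: it defers to those sources for the identification of the equivalent algebraic semantics while correctly sketching the standard Blok--Pigozzi verification of the intrinsic conditions, including the key congruence step for $\bang$ via (nec) and ($\bang$K). Your sketch is accurate and in fact supplies more detail than the paper itself does.
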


\begin{remark}
Since the equivalent algebraic semantics of the deductive systems $\crle$, $\cfle$, $\cmall$, $\cll$ are varieties, Proposition~\ref{prop:isomorphism} gives an anti-isomorphism between their lattices of axiomatic extensions and corresponding subvariety lattices of their equivalent algebraic semantics.
\end{remark}

\subsection{Interpolation, amalgamation, and injectivity} An arbitrary logic $\vdash$ is said to have the \emph{deductive interpolation property} (or \prp{DIP}) if whenever $\Gamma\vdash\varphi$, there exists a set of formulas $\Gamma'$ such that $\var(\Gamma')\subseteq\var(\Gamma)\cap\var(\varphi)$ and $\Gamma\vdash\Gamma'$, $\Gamma'\vdash\varphi$. The collection of formulas $\Gamma'$ is said to be a \emph{deductive interpolant}. 

\begin{remark}
What we call the deductive interpolation property is sometimes called the ``weak deductive interpolation property''. There is also a stronger notion: A deductive system is then said to have the \emph{strong deductive interpolation property} if  whenever $\Gamma,\Delta\vdash\varphi$, there exists a set of formulas $\Gamma'$ such that $\var(\Gamma')\subseteq\var(\Gamma)\cap\var(\Delta \cup \set{\varphi})$, $\Gamma\vdash\Gamma'$, and $\Gamma',\Delta\vdash\varphi$. However, for the deductive systems we consider the two notions are equivalent, since the systems are substructural logics with exchange (cf. \cite{GJKO07}). 
\end{remark}

We say that a consequence relation $\vdash$ is \emph{conjunctive} if there exists a formula in two variables $\kappa(x,y)$ such that for every set of  formulas $\Gamma \cup \set{\phi,\psi,\gamma}$, 
\[
\Gamma, \phi,\psi \vdash \gamma \iff \Gamma, \kappa(\phi,\psi) \vdash \gamma.
\] 
\begin{remark}\label{r:DIP-one-formula}
If a consequence relation $\vdash$ is finitary and conjunctive, it has the \prp{DIP} if and only if for all formulas $\phi,\psi$, whenever $\phi \vdash \psi$, there exists a formulas $\delta$ such that  $\var(\delta)\subseteq\var(\phi)\cap\var(\psi)$ and $\phi\vdash \delta$, $\delta \vdash \phi$.  This holds in particular for any axiomatic extension of the logics $\cll$, $\cmall$, $\cfle$, and $\crle$  with  $\kappa(x,y) = x \meet y$.
\end{remark}

When $\vdash$ is a logic over a language that includes an implication connective $\to$, we say that $\vdash$ has the \emph{Craig interpolation property} (or \prp{CIP}) if whenever $\vdash \varphi\to \psi$, there exists a formula $\delta$ such that $\vdash\varphi\to\delta$, $\vdash\delta\to\psi$, and the variables appearing in $\delta$ are among those appearing in both of $\varphi$ and $\psi$. In this event, we call $\delta$ a \emph{Craig interpolant}. It is shown in \cite{Galatos2006} that if $\vdash$ is a substructural logic with exchange and $\vdash$ has the \prp{CIP}, then it has the \prp{DIP} as well. The converse is not true in general. Intuitively, interpolants provide an explanation why a particular inference holds; see \cite{HH99}.

\begin{remark}
For the logics  $\cll$, $\cmall$, $\cfle$, and $\crle$ the Craig interpolation property can be proved syntactically using a suitable cut-free Gentzen-style sequent calculus; see \cite{GJKO07,Roorda1994}.  In particular, this implies that these logics the \prp{DIP}.
\end{remark}

The algebraic counterpart of the deductive interpolation property is the amalgamation property.

A \emph{span} in $\sf K$ is a quintuple $\alg{\mathbf{A}, \mathbf{B},\mathbf{C},\phi_1,\phi_2}$ of algebras $\mathbf{A},\mathbf{B}, \mathbf{C} \in \mathsf{K}$ and embeddings $\phi_1\colon \mathbf{A} \to \mathbf{B}$ and $\phi_2 \colon \mathbf{A} \to \mathbf{C}$. 
An \emph{amalgam} of a span $\alg{\mathbf{A}, \mathbf{B},\mathbf{C},\phi_1,\phi_2}$ in $\sf K$ is a triple $\alg{\psi_1,\psi_2,\mathbf{D}}$ where $\mathbf{D} \in \mathsf{K}$ and $\psi_1\colon \mathbf{B} \to \mathbf{D}$, $\psi_2 \colon \mathbf{C} \to \mathbf{D}$ are embedding such that $\psi_1\circ \phi_1 = \psi_2 \circ \phi_2$, i.e., the diagram in Figure~\ref{fig:AP}(i) commutes. The amalgam $\alg{\psi_1,\psi_2,\mathbf{D}}$ is called a \emph{strong amalgam} if, moreover, we have $(\psi_1\circ \phi_1)[A] = \psi_1[B] \cap \psi_2[C]$.
The class $\sf K$ is said to have the \emph{(strong) amalgamation property} if every span in $\sf K$ has a (strong) amalgam.

\begin{figure}
\centering
\begin{tabular}{ccc}
\begin{tikzcd}
	& {\bf B} \\
	{\bf A} && {\bf D} \\
	& {\bf C}
	\arrow["{\phi_1}", hook, from=2-1, to=1-2]
	\arrow["{\psi_1}", dashed, hook, from=1-2, to=2-3]
	\arrow["{\phi_2}"', hook, from=2-1, to=3-2]
	\arrow["{\psi_2}"', dashed, hook, from=3-2, to=2-3]
\end{tikzcd}
& &
\begin{tikzcd}
	& {\bf B} \\
	{\bf A} && {\bf D} \\
	& {\bf C}
	\arrow["{\phi_1}", hook, from=2-1, to=1-2]
	\arrow["{\psi_1}", dashed, hook, from=1-2, to=2-3]
	\arrow["{\phi_2}"', hook, from=2-1, to=3-2]
	\arrow["{\psi_2}"', dashed,  from=3-2, to=2-3]
\end{tikzcd}
\\
(i) & & (ii)
\end{tabular}
\caption{Commutative diagrams for the amalgamation properties}
\label{fig:AP}
\end{figure}

A \emph{one-sided amalgam} of a span $\alg{\mathbf{A}, \mathbf{B},\mathbf{C},\phi_1,\phi_2}$ in $\sf K$ is a triple $\alg{\psi_1,\psi_2,\mathbf{D}}$ where $\mathbf{D} \in \mathsf{K}$ and $\psi_1\colon \mathbf{B} \to \mathbf{D}$, is an embedding and  $\psi_2 \colon \mathbf{C} \to \mathbf{D}$ is a homomorphism such that $\psi_1\circ \phi_1 = \psi_2 \circ \phi_2$, i.e., the diagram in Figure~\ref{fig:AP}(ii) commutes.
The class $\sf K$ is said to have the \emph{one-sided amalgamation property} if every span in $\sf K$ has a one-sided amalgam.

In order to establish that a variety $\V$ has the amalgamation property, it is often convenient to work with a tractable generating class for $\V$. Recall that an algebra ${\m A}$ is said to be \emph{finitely subdirectly irreducible} if the least congruence $\Delta$ of ${\m A}$ is meet-irreducible, i.e., whenever $\Theta$ and $\Psi$ are congruences of ${\m A}$ and $\Delta = \Theta\cap\Psi$ we have $\Delta=\Theta$ or $\Delta=\Psi$. The following may be found in \cite[Theorem 3.4]{FM22}. 

\begin{thm}\label{t:lift amalg}
Suppose that $\V$ is a variety with the congruence extension property, and assume that the class $\V_{\mathrm{FSI}}$ of finitely subdirectly irreducible members of $\V$ is closed under subalgebras.  Then  $\V$ has the amalgamation property if and only if $\V_{\mathrm{FSI}}$ has the one-sided amalgamation property.
\end{thm}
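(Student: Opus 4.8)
\section*{Proof proposal}

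The plan is to reduce amalgamation in $\V$ to amalgamation among finitely subdirectly irreducible (for brevity, FSI) algebras, by decomposing the outer algebras of a span subdirectly and amalgamating the resulting factors. Identifying $\mathbf{A}$ with its images, I treat the span as a pair of inclusions $\mathbf{A} \leq \mathbf{B}$ and $\mathbf{A} \leq \mathbf{C}$, and I aim to produce $\mathbf{D} \in \V$ together with embeddings $\psi_1 \colon \mathbf{B} \to \mathbf{D}$ and $\psi_2 \colon \mathbf{C} \to \mathbf{D}$ that agree on $\mathbf{A}$.

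First I would apply Birkhoff's subdirect representation theorem to embed $\mathbf{B}$ subdirectly into $\prod_{i \in I} \mathbf{B}_i$ and $\mathbf{C}$ subdirectly into $\prod_{j \in J} \mathbf{C}_j$, where all factors are subdirectly irreducible, hence FSI, and lie in $\V$. For each $i \in I$ let $\theta_i$ be the kernel of $\mathbf{A} \hookrightarrow \mathbf{B} \twoheadrightarrow \mathbf{B}_i$; its image is a subalgebra of the FSI algebra $\mathbf{B}_i$, so closure of the FSI members under subalgebras makes $\mathbf{A}/\theta_i$ FSI, i.e.\ $\theta_i$ is meet-irreducible in $\mathrm{Con}(\mathbf{A})$. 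Symmetrically, the kernel $\rho_j$ of $\mathbf{A} \hookrightarrow \mathbf{C} \twoheadrightarrow \mathbf{C}_j$ is meet-irreducible and $\mathbf{A}/\rho_j$ embeds into $\mathbf{C}_j$.

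The heart of the argument, and the step I expect to be the main obstacle, is transporting these FSI quotients of $\mathbf{A}$ across to the opposite algebra while preserving finite subdirect irreducibility. For fixed $i$, the congruence extension property yields a congruence on $\mathbf{C}$ restricting to $\theta_i$ on $A^2$; since restriction to $A^2$ commutes with unions of chains, Zorn's lemma produces a congruence $\Psi_i$ on $\mathbf{C}$ that is maximal subject to $\Psi_i \cap A^2 = \theta_i$. I would then show $\Psi_i$ is meet-irreducible: if $\Psi_i = \alpha \cap \beta$ with $\alpha, \beta \supsetneq \Psi_i$, maximality forces $\alpha \cap A^2 \supsetneq \theta_i$ and $\beta \cap A^2 \supsetneq \theta_i$, so $\theta_i = (\alpha \cap A^2) \cap (\beta \cap A^2)$ would contradict the meet-irreducibility of $\theta_i$. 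Hence $\mathbf{C}/\Psi_i$ is FSI and $\mathbf{A}/\theta_i \hookrightarrow \mathbf{C}/\Psi_i$. The same construction applied to each $\rho_j$ gives an FSI quotient $\mathbf{B}/\Phi_j$ with $\mathbf{A}/\rho_j \hookrightarrow \mathbf{B}/\Phi_j$.

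Finally I would invoke the amalgamation property of the FSI members on each span $\mathbf{B}_i \hookleftarrow \mathbf{A}/\theta_i \hookrightarrow \mathbf{C}/\Psi_i$ to obtain $\mathbf{D}_i \in \V$ with compatible embeddings of $\mathbf{B}_i$ and $\mathbf{C}/\Psi_i$, and likewise amalgamate each span $\mathbf{C}_j \hookleftarrow \mathbf{A}/\rho_j \hookrightarrow \mathbf{B}/\Phi_j$ into some $\mathbf{E}_j \in \V$. Taking $\mathbf{D} = \prod_{i \in I} \mathbf{D}_i \times \prod_{j \in J} \mathbf{E}_j$, the natural maps $\psi_1 \colon \mathbf{B} \to \mathbf{D}$ and $\psi_2 \colon \mathbf{C} \to \mathbf{D}$ (built from $\mathbf{B} \to \mathbf{B}_i \to \mathbf{D}_i$ and $\mathbf{B} \to \mathbf{B}/\Phi_j \to \mathbf{E}_j$, and dually for $\mathbf{C}$) agree on $\mathbf{A}$ by construction of the factorwise amalgams. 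The point of using both subdirect decompositions is exactly to secure injectivity: $\psi_1$ is an embedding because its components into $\prod_{i} \mathbf{D}_i$ already separate the points of $\mathbf{B}$, and $\psi_2$ is an embedding because its components into $\prod_{j} \mathbf{E}_j$ separate the points of $\mathbf{C}$. Thus $\langle \psi_1, \psi_2, \mathbf{D}\rangle$ is the desired amalgam, establishing the amalgamation property for $\V$.
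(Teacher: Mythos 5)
Your proposal is correct, and it is essentially the argument behind this result in the sources the paper cites for it (\cite[Theorem 3.4]{FM22}, \cite[Theorem 9]{MMT14}); the paper itself gives no proof. All the key steps check out: the congruence extension property plus Zorn's lemma yields the maximal extensions $\Psi_i$, the maximality argument correctly transfers meet-irreducibility, and your closing observation that injectivity of $\psi_1$ is secured by the $\mathbf{B}_i$-components alone (and of $\psi_2$ by the $\mathbf{C}_j$-components) is exactly the point that lets the cited versions weaken the hypothesis to a one-sided amalgamation property for the finitely subdirectly irreducible members.
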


Let $\mathsf{K}$ be a class of similar algebras. An algebra $\mathbf{Q} \in \mathsf{K}$ is called \emph{injective} over $\mathsf{K}$ if for all algebras $\mathbf{A}, \mathbf{B} \in \mathsf{K}$, embedding $\alpha \colon \mathbf{B} \to \mathbf{A}$, and homomorphism $\beta \colon \mathbf{B} \to \mathbf{Q}$ there exists a homomorphism $\phi \colon \mathbf{A} \to \mathbf{Q}$ such that $\phi\circ \alpha = \beta$, i.e., the following diagram commutes:
\[\begin{tikzcd}
	{\bf B} & {\bf A} \\
	{\bf Q}
	\arrow["\beta"', from=1-1, to=2-1]
	\arrow["\phi", dashed, from=1-2, to=2-1]
	\arrow["\alpha", hook, from=1-1, to=1-2]
\end{tikzcd}\]
The class $\mathsf{K}$ is said to have \emph{enough injectives} if every algebra in $\mathsf{K}$ embeds into an algebra in $\mathsf{K}$ that is injective over $\mathsf{K}$.

\begin{lemma}[cf. \cite{KMPT83}]\label{l:EI-AP}
Let $\mathsf{K}$ be a class of similar algebras that is closed under taking finite products. If $\mathsf{K}$ has enough injectives, then it has the amalgamation property. 
\end{lemma}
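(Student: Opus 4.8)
The plan is to use the injectives to build an amalgam coordinatewise inside a finite product. Fix a span $\langle \mathbf{A}, \mathbf{B}, \mathbf{C}, \phi_1, \phi_2\rangle$ in $\mathsf{K}$, so that $\phi_1\colon \mathbf{A}\to\mathbf{B}$ and $\phi_2\colon\mathbf{A}\to\mathbf{C}$ are embeddings. Since $\mathsf{K}$ has enough injectives, I would first fix embeddings $e_1\colon\mathbf{B}\to\mathbf{Q}_1$ and $e_2\colon\mathbf{C}\to\mathbf{Q}_2$ into algebras $\mathbf{Q}_1,\mathbf{Q}_2\in\mathsf{K}$ that are injective over $\mathsf{K}$.

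Next I would invoke injectivity twice to produce a pair of ``cross'' maps. Applying injectivity of $\mathbf{Q}_2$ to the homomorphism $e_2\circ\phi_2\colon\mathbf{A}\to\mathbf{Q}_2$ along the embedding $\phi_1\colon\mathbf{A}\to\mathbf{B}$ yields a homomorphism $f_1\colon\mathbf{B}\to\mathbf{Q}_2$ with $f_1\circ\phi_1=e_2\circ\phi_2$. Symmetrically, applying injectivity of $\mathbf{Q}_1$ to $e_1\circ\phi_1\colon\mathbf{A}\to\mathbf{Q}_1$ along the embedding $\phi_2$ yields $f_2\colon\mathbf{C}\to\mathbf{Q}_1$ with $f_2\circ\phi_2=e_1\circ\phi_1$.

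I would then set $\mathbf{D}=\mathbf{Q}_1\times\mathbf{Q}_2$, which lies in $\mathsf{K}$ by closure under finite products, and define $\psi_1\colon\mathbf{B}\to\mathbf{D}$ by $\psi_1(b)=(e_1(b),f_1(b))$ and $\psi_2\colon\mathbf{C}\to\mathbf{D}$ by $\psi_2(c)=(f_2(c),e_2(c))$. Each map is a homomorphism since its coordinates are, and each is an embedding because its distinguished coordinate ($e_1$ for $\psi_1$, $e_2$ for $\psi_2$) is injective. A direct computation then gives, for $a\in A$,
\[
\psi_1(\phi_1(a)) = (e_1\phi_1(a),\, e_2\phi_2(a)) = \psi_2(\phi_2(a)),
\]
using the two defining identities $f_1\phi_1=e_2\phi_2$ and $f_2\phi_2=e_1\phi_1$. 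Hence $\langle\psi_1,\psi_2,\mathbf{D}\rangle$ is an amalgam of the span, and $\mathsf{K}$ has the amalgamation property.

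The one subtle point—and the step where a naive argument fails—is that injectivity by itself extends a map into an injective only as a homomorphism, which need not be injective on the remaining factor of the span. Embedding \emph{both} $\mathbf{B}$ and $\mathbf{C}$ into injectives and recording each in its own coordinate of $\mathbf{Q}_1\times\mathbf{Q}_2$ is precisely what repairs this, and is exactly where closure under finite products is indispensable. Everything else is the routine verification that the coordinatewise maps are homomorphisms and embeddings.
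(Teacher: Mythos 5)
Your proof is correct and is the standard argument from the cited literature; the paper itself states this lemma with a reference to Kiss--M\'arki--Pr\"ohle--Tholen and gives no proof, so there is nothing to diverge from. The two applications of injectivity produce exactly the cross maps needed, the product coordinate trick correctly repairs the failure of injectivity of the extended homomorphisms, and the commutativity computation $\psi_1\circ\phi_1=\psi_2\circ\phi_2$ checks out.
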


Let $\sf K$ be a class of similar algebras and $\mathbf{A},\mathbf{B} \in \mathsf{K}$. A homomorphism $\phi\colon \mathbf{A} \to \mathbf{B}$ is called an \emph{epimorphism in} $\sf K$ if for all $\mathbf{C} \in \mathsf{K}$ and homomorphisms $\psi_1,\psi_2 \colon \mathbf{B} \to \mathbf{C}$, if $\psi_1\circ \phi = \psi_2 \circ \phi$, then $\psi_1 = \psi_2$. The class $\mathsf{K}$ is said to have \emph{surjective epimorphisms} if every epimorphism in $\sf K$ is surjective.

\begin{lemma}[cf. \cite{KMPT83}]\label{l:ES-SAP}
A quasivariety $\mathsf{K}$ has surjective epimorphisms and the amalgamation property if and only if  it has the strong amalgamation property.
\end{lemma}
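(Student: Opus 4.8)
The plan is to prove the two implications separately, treating the forward direction (strong amalgamation implies surjective epimorphisms together with amalgamation) as the routine one and the converse as the substantive one.

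For the forward direction, that strong amalgamation entails amalgamation is immediate, since a strong amalgam is in particular an amalgam. To see that it entails surjective epimorphisms, I would first reduce to embeddings: factoring an arbitrary epimorphism $\phi\colon \mathbf{A}\to\mathbf{B}$ as a surjection onto its image followed by the inclusion $\iota\colon \phi[\mathbf{A}]\hookrightarrow\mathbf{B}$, one checks that $\iota$ is again an epimorphism (if $\psi_1\circ\iota=\psi_2\circ\iota$ then $\psi_1\circ\phi=\psi_2\circ\phi$, whence $\psi_1=\psi_2$), so it suffices to show that every epimorphism that is an embedding is surjective. Given such an embedding $\iota\colon\mathbf{A}\to\mathbf{B}$ and a putative element $b\in B\setminus\iota[A]$, I would apply strong amalgamation to the span $\alg{\mathbf{A},\mathbf{B},\mathbf{B},\iota,\iota}$ to obtain a strong amalgam $\alg{\psi_1,\psi_2,\mathbf{D}}$. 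The strongness condition $(\psi_1\circ\iota)[A]=\psi_1[B]\cap\psi_2[B]$ together with the injectivity of $\psi_1$ forces $\psi_1(b)\neq\psi_2(b)$: otherwise $\psi_1(b)$ would lie in the intersection and hence equal $\psi_1(\iota(a))$ for some $a\in A$, giving $b=\iota(a)$. Thus $\psi_1\neq\psi_2$ while $\psi_1\circ\iota=\psi_2\circ\iota$, contradicting that $\iota$ is an epimorphism. Hence no such $b$ exists and $\iota$ is surjective.

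For the converse, I would start from a span $\alg{\mathbf{A},\mathbf{B},\mathbf{C},\phi_1,\phi_2}$ and use amalgamation to produce an amalgam $\alg{\psi_1,\psi_2,\mathbf{D}}$, which after passing to the subalgebra generated by $\psi_1[B]\cup\psi_2[C]$ I may assume is generated by the two images; identifying $\mathbf{B}$ and $\mathbf{C}$ with their images, this exhibits them as subalgebras of $\mathbf{D}$ whose overlap contains $\mathbf{A}':=(\psi_1\circ\phi_1)[A]=(\psi_2\circ\phi_2)[A]$. The task is then to replace $\mathbf{D}$ by an amalgam in which $\psi_1[B]\cap\psi_2[C]=\mathbf{A}'$. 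Here I would invoke the standard reformulation of the surjective-epimorphism property in terms of dominions: it is equivalent to every subalgebra coinciding with its dominion, that is, for $\mathbf{A}'\le\mathbf{B}$ and $b\notin A'$ there exist $\mathbf{Z}\in\mathsf{K}$ and homomorphisms $f,g\colon\mathbf{B}\to\mathbf{Z}$ with $f|_{A'}=g|_{A'}$ and $f(b)\neq g(b)$. The plan is to use such separating pairs to split apart any element of $\psi_1[B]\cap\psi_2[C]$ not already in $\mathbf{A}'$, folding these separations into a single amalgam by a further application of the amalgamation property (equivalently, by analysing the pushout $\mathbf{B}+_{\mathbf{A}}\mathbf{C}$ formed in $\mathsf{K}$ and showing it is already a strong amalgam).

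The main obstacle is precisely this last step. A separating pair $f,g\colon\mathbf{B}\to\mathbf{Z}$ lives only on $\mathbf{B}$, and to deploy it one must extend the relevant homomorphisms over the whole amalgam while keeping the two copies compatible on $\mathbf{C}$; such extensions are not available for free, since the composite map from $\mathbf{A}$ into $\mathbf{Z}$ need not be an embedding and so one cannot naively amalgamate $\mathbf{Z}$ with $\mathbf{C}$. The crux of the argument is therefore to combine the dominion characterization with repeated use of the amalgamation property, controlling carefully at each step which homomorphisms are embeddings, so that the separating maps are realized as genuine homomorphisms out of $\mathbf{D}$ (or out of the pushout). This would show that any overlap element outside $\mathbf{A}'$ could be separated while its two witnesses agree on $\mathbf{A}'$ — which in the pushout is impossible — yielding the desired equality. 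This is the technical heart of the Kiss--M\'arki--Pr\"ohle--Tholen analysis cited for the lemma, and I would follow that line.
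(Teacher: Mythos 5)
The paper does not actually prove this lemma --- it is imported wholesale from \cite{KMPT83} --- so the only question is whether your argument stands on its own, and it does not. Your forward direction is complete and correct: the reduction of an arbitrary epimorphism to an epic embedding via the image factorization is fine (the image is a subalgebra, hence in the quasivariety), and applying a strong amalgam of the span $\alg{\mathbf{A},\mathbf{B},\mathbf{B},\iota,\iota}$ to separate any $b\notin\iota[A]$ is exactly the standard argument.

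The converse, however, is only a plan, and the plan stalls precisely where you say it does. Reformulating epi-surjectivity as ``every subalgebra equals its dominion'' is the right move, but to use a separating pair $f,g\colon\mathbf{B}\to\mathbf{Z}$ with $f|_{\phi_1[A]}=g|_{\phi_1[A]}$ against an element of $\psi_1[B]\cap\psi_2[C]$, you must realize $f$ and $g$ as restrictions of homomorphisms defined on the whole amalgam (or pushout) that agree on the copy of $\mathbf{C}$. That requires extending the common map $f\circ\phi_1\colon\mathbf{A}\to\mathbf{Z}$ along the embedding $\phi_2\colon\mathbf{A}\hookrightarrow\mathbf{C}$ into some $\mathbf{Z}'$ containing $\mathbf{Z}$ --- a transferability-type property that the amalgamation property alone does not deliver, since $f\circ\phi_1$ need not be injective and so one cannot amalgamate $\mathbf{Z}$ with $\mathbf{C}$ over $\mathbf{A}$. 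You name this obstacle and then defer its resolution to the cited source, so the proposal does not contain a proof of the harder implication; it contains a correct proof of the easy implication together with an accurate description of what a proof of the other one would have to accomplish. To close the gap one must either carry out the Kiss--M\'arki--Pr\"ohle--Tholen analysis (which works with the pushout $\mathbf{B}*_{\mathbf{A}}\mathbf{C}$ in the quasivariety and a careful bookkeeping of which canonical maps are injective), or simply cite the result as the paper does.
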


The following summarize some well-known bridge theorems. To properly state these, we recall that a deductive system $\vdash$ over $\mathcal{L}$ has a \emph{local deduction theorem} if there exists a family $\Lambda$ of sets of formulas in two variables such that, for all $\Gamma\cup\{\varphi,\psi\}\subseteq\fm_\mathcal{L}$, we have $\Gamma,\varphi\vdash\psi$ if and only if there exists $\lambda\in\Lambda$ such that for all $l\in\lambda$, $\Gamma\vdash l(\varphi,\psi)$. Local deduction theorems generalize explicit deduction theorems, such as Avron's deduction theorem for linear logic.

\begin{thm}[\cite{Blok1991}]\label{t:Blok1991}
Let $\vdash$ be a strongly algebraizable deductive system with equivalent algebraic semantics $\sf V$. Then $\vdash$ has a local deduction theorem if and only if $\sf V$ has the congruence extension property.
\end{thm}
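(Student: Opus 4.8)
\section*{Proof proposal for Theorem~\ref{t:Blok1991}}

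The plan is to route the biconditional through the \emph{Leibniz operator} supplied by algebraizability, converting both the logical condition (local deduction theorem) and the algebraic condition (congruence extension property) into a single common condition on $\vdash$-filters. For an algebra $\mathbf A$ in the language of $\vdash$, a \emph{$\vdash$-filter} is a subset closed under the interpretations of $\vdash$; write $\mathrm{Fg}^{\mathbf A}$ for the generated-filter operator and $\mathrm{Fi}(\mathbf A)$ for the lattice of $\vdash$-filters. Using the equivalence formulas $\Delta(x,y)$ furnished by strong algebraizability (Theorem~\ref{t:algebraizable}), define $\Omega_{\mathbf A}(F) = \{(a,b) : \Delta^{\mathbf A}(a,b)\subseteq F\}$, where $\Delta^{\mathbf A}(a,b)$ is the set of interpreted equivalence formulas. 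The first step is to record two facts, immediate from algebraizability: (i) $\Omega_{\mathbf A}$ is a lattice isomorphism from $\mathrm{Fi}(\mathbf A)$ onto the lattice of congruences $\theta$ with $\mathbf A/\theta\in\V$, which for $\mathbf A\in\V$ is simply $\mathrm{Con}(\mathbf A)$ (here strong algebraizability, giving a variety $\V$, is used); and (ii) $\Omega$ commutes with inverse images, i.e.\ $\Omega_{\mathbf A}(f^{-1}[F]) = f^{-1}[\Omega_{\mathbf C}(F)]$ for every homomorphism $f\colon\mathbf A\to\mathbf C$ and $\vdash$-filter $F$ on $\mathbf C$. Property (ii) is a one-line verification from the fact that homomorphisms preserve the formulas of $\Delta$.

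The pivot is the \emph{filter extension property} (FEP): for all $\mathbf A,\mathbf B\in\V$ with $\mathbf B\leq\mathbf A$ and every $\vdash$-filter $G$ on $\mathbf B$, there is a $\vdash$-filter $F$ on $\mathbf A$ with $F\cap B = G$. I would first prove that the congruence extension property is equivalent to FEP, which follows directly from (i) and (ii): specializing (ii) to the inclusion $\mathbf B\hookrightarrow\mathbf A$ gives $\Omega_{\mathbf B}(F\cap B) = \Omega_{\mathbf A}(F)\cap B^2$, and then the isomorphisms $\Omega_{\mathbf A},\Omega_{\mathbf B}$ translate an extending filter into an extending congruence and back. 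The remaining, and main, task is to prove that the local deduction theorem is equivalent to FEP.

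For the implication ``local deduction theorem $\Rightarrow$ FEP'', given $\mathbf B\leq\mathbf A$ in $\V$ and a $\vdash$-filter $G$ on $\mathbf B$, I would take $F=\mathrm{Fg}^{\mathbf A}(G)$ and show $F\cap B=G$. The inclusion $G\subseteq F\cap B$ is clear. For the converse, finitarity gives, for each $b\in F\cap B$, finitely many $g_1,\dots,g_n\in G$ with $b\in\mathrm{Fg}^{\mathbf A}(\{g_1,\dots,g_n\})$. Repeatedly applying the local deduction theorem, transferred to the generation operator $\mathrm{Fg}^{\mathbf A}$ by a routine structurality argument (pulling back along a surjection from a term algebra), peels off the generators one at a time; since $\mathbf B$ is a subalgebra, every element $l^{\mathbf A}(g_i,b')$ arising from the witnessing formulas lies in $B$, so the same derivations take place inside $\mathbf B$. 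The base case, where the antecedent has been emptied, is handled by (i) and (ii): they force $\mathrm{Fg}^{\mathbf A}(\emptyset)\cap B=\mathrm{Fg}^{\mathbf B}(\emptyset)$, because the least $\vdash$-filter corresponds under $\Omega$ to the identity congruence, whose trace on $B^2$ is the identity congruence of $\mathbf B$. Together these give $b\in\mathrm{Fg}^{\mathbf B}(\{g_1,\dots,g_n\})\subseteq G$.

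The hard direction is ``FEP $\Rightarrow$ local deduction theorem'', since here a purely algebraic extension property must be converted into the syntactic family $\Lambda$. I would take $\Lambda$ to be the canonical family of all finite sets $\lambda$ of formulas in the variables $x,y$ for which $\lambda(x,y), x\vdash y$; the ``if'' direction of the local deduction theorem then holds for any structural finitary logic by a single cut. For the ``only if'' direction I would argue inside the relatively free algebras of $\V$, which, unlike the absolutely free formula algebra, lie in $\V$, so that FEP applies; this is legitimate because algebraizability makes the local deduction theorem, a property of $\vdash$, equivalent to a property of $\models_{\V}$. Concretely, given $\Gamma,\varphi\vdash\psi$, I would realize the generic instance $x\mapsto\varphi$, $y\mapsto\psi$ as a $\vdash$-filter on the subalgebra generated by the images of $x$ and $y$, extend it using FEP, and read the required finite $\lambda$ off a finite generating set of the extended filter via finitarity. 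This final extraction is the step I expect to be the main obstacle, as it is where the ``local'' (existential-over-$\lambda$) character of the deduction theorem must be manufactured from congruence-theoretic data; the naturality property (ii) is what keeps the translation between the two sides coherent throughout.
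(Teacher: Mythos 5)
The paper offers no proof of Theorem~\ref{t:Blok1991}; it is quoted from the cited source, so your proposal can only be measured against the standard argument found there. Your overall strategy is exactly that standard one: use the Leibniz operator $\Omega$ to identify $\vdash$-filters with the congruences $\theta$ satisfying $\mathbf{A}/\theta\in\V$, reduce the congruence extension property to the filter extension property, and reduce the local deduction theorem to that same property. The pieces you actually carry out are sound: facts (i) and (ii) are correct (and (i) is indeed where strong algebraizability enters, since $\V$ being a variety makes every congruence of an algebra in $\V$ a $\V$-congruence); the equivalence of the CEP with the FEP via $\Omega_{\mathbf B}(F\cap B)=\Omega_{\mathbf A}(F)\cap B^2$ is right; and the direction from the local deduction theorem to the FEP is essentially complete once one supplies the routine transfer of the deduction theorem from $\vdash$ to the operators $\mathrm{Fg}^{\mathbf A}$, which you correctly identify.

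The genuine gap is the converse direction, from the FEP to the local deduction theorem, which you yourself flag as the main obstacle and do not close. Choosing the canonical family $\Lambda=\{\lambda(x,y)\ \text{finite}:\lambda(x,y),x\vdash y\}$ is the right move, and the ``if'' half is indeed a single cut; but the sketch for the ``only if'' half does not work as stated. If you realize the instance $x\mapsto\varphi$, $y\mapsto\psi$ as a filter on the subalgebra generated by the images of $x$ and $y$, you lose $\Gamma$: the hypothesis $\Gamma,\varphi\vdash\psi$ concerns the filter generated by $\Gamma\cup\{\varphi\}$, and $\Gamma$ need not lie in that two-generated subalgebra, so restricting to the trace of $\mathrm{Cn}(\Gamma)$ on it discards information, and extending a filter of the small algebra via the FEP gives no control over membership in the larger filter upstairs. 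What must actually be shown is that for $D=\{\delta(x,y):\Gamma\vdash\delta(\varphi,\psi)\}$ one has $D,x\vdash y$, so that finitarity extracts the required finite $\lambda\in\Lambda$ with $\Gamma\vdash\lambda(\varphi,\psi)$; manufacturing this schematic, two-variable entailment from the extension property is precisely the nontrivial content of the cited theorem, and as it stands that half of the biconditional is asserted rather than proved.
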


\begin{thm}[\cite{CP99}]\label{t:DIP-AP}
Let $\vdash$ be a strongly algebraizable deductive system with a local deduction theorem and equivalent algebraic semantics $\sf V$. Then $\vdash$ has the deductive interpolation property if and only if $\sf V$ has the amalgamation property.
\end{thm}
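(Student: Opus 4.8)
The plan is to prove both implications by passing through the algebraization of $\vdash$ furnished by Theorem~\ref{t:algebraizable}, translating statements about deducibility into statements about $\sf V$-congruences on free algebras. Throughout, write $\mathbf F(Z)$ for the free algebra in $\sf V$ on a set of variables $Z$, and recall the dictionary supplied by strong algebraizability: a $\sf V$-congruence $\Theta$ on $\mathbf F(Z)$ corresponds to the $\vdash$-theory $\{\psi : \tau(\psi)\subseteq\Theta\}$, with $(s,t)\in\Theta$ iff $\Delta(s,t)$ lies in this theory; moreover $(s,t)$ lies in the congruence generated by a set of equations $E$ exactly when $\bigcup_{(u\approx v)\in E}\Delta(u,v)\vdash\Delta(s,t)$. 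The hypothesis that $\vdash$ has a local deduction theorem enters through Theorem~\ref{t:Blok1991}, which tells us this is equivalent to $\sf V$ having the congruence extension property (CEP); I invoke CEP freely. I also use that the local deduction theorem lets one move premises into conclusions, so that the \prp{DIP} may be applied in partitioned form: from $\Gamma,\Delta'\vdash\varphi$ one extracts an interpolant $\Gamma'$ with $\var(\Gamma')\subseteq\var(\Gamma)\cap\var(\Delta'\cup\{\varphi\})$, $\Gamma\vdash\Gamma'$, and $\Gamma',\Delta'\vdash\varphi$ (for substructural logics with exchange this strong form coincides with the weak one, as noted in the remark following the definition of the \prp{DIP}).

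For the direction amalgamation $\Rightarrow$ \prp{DIP}, suppose $\Gamma\vdash\varphi$ and set $Z=\var(\Gamma)\cap\var(\varphi)$. Let $\mathbf A=\mathbf F(Z)$, $\mathbf B=\mathbf F(\var(\Gamma))$, $\mathbf C=\mathbf F(\var(\varphi))$, so $\mathbf A$ embeds into $\mathbf B$ and $\mathbf C$ as the subalgebra on $Z$. Let $\Theta_B$ be the congruence on $\mathbf B$ generated by $\tau[\Gamma]$, put $\Gamma'=\{\psi : \var(\psi)\subseteq Z,\ \Gamma\vdash\psi\}$, and note $\Gamma\vdash\Gamma'$ holds automatically while $\Gamma'$ corresponds to $\Theta_A:=\Theta_B\cap A^2$. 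Let $\Theta_C$ be the congruence on $\mathbf C$ generated by $\tau[\Gamma']$. Because $\sf V$ has CEP, the least congruence on $\mathbf C$ extending $\Theta_A$ already restricts to $\Theta_A$, so $\Theta_C\cap A^2=\Theta_A$ and $\mathbf A/\Theta_A\hookrightarrow\mathbf C/\Theta_C$; likewise $\mathbf A/\Theta_A\hookrightarrow\mathbf B/\Theta_B$. Amalgamating this span yields $\mathbf D\in\sf V$ with embeddings $f\colon\mathbf B/\Theta_B\to\mathbf D$ and $g\colon\mathbf C/\Theta_C\to\mathbf D$ agreeing on $\mathbf A/\Theta_A$. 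Define $h\colon\mathbf F(\var(\Gamma)\cup\var(\varphi))\to\mathbf D$ on generators by routing the $Z$-variables through the common image, the other $\var(\Gamma)$-variables through $f$, and the other $\var(\varphi)$-variables through $g$; this is well defined and identifies $\tau[\Gamma]$, so $\Gamma\vdash\varphi$ forces $h$ to identify $\tau(\varphi)$. As $\varphi$ uses only $\var(\varphi)$-variables and $g$ is injective, $\tau(\varphi)\subseteq\Theta_C$, which says exactly $\Gamma'\vdash\varphi$; hence $\Gamma'$ is an interpolant.

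For the converse, assume the \prp{DIP} and let a span be given; identifying along the embeddings, take $\mathbf A\leq\mathbf B$ and $\mathbf A\leq\mathbf C$. Choose variable sets $Z\subseteq X$ and $Z\subseteq Y$ with $X\cap Y=Z$, and surjections $\pi_B\colon\mathbf F(X)\twoheadrightarrow\mathbf B$, $\pi_C\colon\mathbf F(Y)\twoheadrightarrow\mathbf C$ whose restrictions to $\mathbf F(Z)$ both present $\mathbf A$; set $\Phi_B=\ker\pi_B$, $\Phi_C=\ker\pi_C$, so that $\Phi_B\cap\mathbf F(Z)^2=\Phi_C\cap\mathbf F(Z)^2=:\Phi_A$. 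Let $\Phi$ be the $\sf V$-congruence on $\mathbf F(X\cup Y)$ generated by $\Phi_B\cup\Phi_C$, and set $\mathbf D=\mathbf F(X\cup Y)/\Phi$; the inclusions of $\mathbf F(X)$ and $\mathbf F(Y)$ induce a commuting square $\mathbf B\to\mathbf D$, $\mathbf C\to\mathbf D$ agreeing on $\mathbf A$. It remains to show these are embeddings, i.e.\ that $\Phi\cap\mathbf F(X)^2=\Phi_B$. Given such an $(s,t)$, the dictionary turns membership in $\Phi$ into deductions $\Gamma_B,\Gamma_C\vdash\theta$ for each $\theta\in\Delta(s,t)$, where $\Gamma_B$ (variables in $X$) and $\Gamma_C$ (variables in $Y$) encode $\Phi_B,\Phi_C$; finitarity leaves only finitely many premises. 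Applying the partitioned \prp{DIP} to separate the $Y$-premises gives $\Gamma''$ with $\var(\Gamma'')\subseteq Z$, $\Gamma_C\vdash\Gamma''$, and $\Gamma'',\Gamma_B\vdash\theta$. The first relation places each member of $\Gamma''$ in the theory of $\Phi_C$ restricted to $\mathbf F(Z)$, i.e.\ the theory of $\Phi_A$; since $\Phi_A\subseteq\Phi_B$, we get $\Gamma_B\vdash\Gamma''$, hence $\Gamma_B\vdash\theta$ by transitivity. Thus $\Gamma_B\vdash\Delta(s,t)$, which translates back to $(s,t)\in\Phi_B$. So $\mathbf B\to\mathbf D$ (and symmetrically $\mathbf C\to\mathbf D$) is an embedding, and $\mathbf D$ amalgamates the span.

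The crux---and the step I expect to be most delicate---is proving injectivity of the structure maps in the converse, i.e.\ that generating the congruence from $\Phi_B\cup\Phi_C$ does not collapse $\mathbf F(X)$ beyond $\Phi_B$. This is precisely where interpolation is indispensable: lumping all premises together and applying the \prp{DIP} yields a useless interpolant over all of $X$, so one must genuinely separate the $Y$-variables from the $X$-variables, which is exactly what the partitioned (strong) form of the \prp{DIP}---available thanks to the local deduction theorem---provides. Dually, in the forward direction the subtle point is arranging $\mathbf A/\Theta_A\hookrightarrow\mathbf C/\Theta_C$, where the congruence extension property (equivalently, the local deduction theorem via Theorem~\ref{t:Blok1991}) guarantees that the generated congruence restricts correctly.
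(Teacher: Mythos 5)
Your argument is correct and is essentially the standard proof of this bridge theorem: the paper itself imports the result from \cite{CP99} without giving a proof, and your two directions---lifting an amalgam of quotients of free algebras to extract the interpolant $\Gamma'$, and conversely using the partitioned form of the \prp{DIP} to show that the congruence generated by $\Phi_B\cup\Phi_C$ does not collapse $\mathbf F(X)$ beyond $\Phi_B$---match the argument given there, with the congruence extension property (via Theorem~\ref{t:Blok1991}) entering exactly where you say it does. The only point you elide is the routine bookkeeping needed when the algebras in a span are not generated by countably many elements (so that theories over the fixed countable variable set $\Var$ no longer literally present them), a technicality the cited sources also handle separately.
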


\section{Continuum-many extensions with the deductive interpolation property}

We will construct continuum-many subvarieties of each of $\RL$, $\PRL$, $\GA$, and $\GAM$ that have the amalgamation property. From Proposition~\ref{prop:isomorphism}, these subvarieties are in bijective correspondence with axiomatic extensions of $\crle$, $\cfle$, $\cmall$, and $\cll$, respectively. Consequently, we may conclude from Theorems~\ref{t:DIP-AP} and \ref{t:algebraizable} that there are continuum-many axiomatic extensions of each of $\crle$, $\cfle$, $\cmall$, and $\cll$ with the deductive interpolation property.
In each case, generating algebras for the aforementioned subvarieties are built from certain appropriately chosen abelian groups. Similar kinds of residuated lattices have been studied very recently in \cite{GZ23}.

For every prime number $p$ we let $\mathbf{Z}_p = \alg{Z_p, \cdot,{}^{-1}, 1}$ be the cyclic group of order $p$ and we let $\mathbf{Z} = \alg{\mathbb{Z},+,-,0}$ be the group of integers.
A subgroup $\mathbf{G}$ of a group $\mathbf{H}$ is called an \emph{essential subgroup} of $\mathbf{H}$ if for every non-trivial subgroup $\mathbf{G}'$ of $\mathbf{H}$ we have that $G\cap G'$ is non-trivial.

\begin{lemma}[\cite{Eckmann1953}]\label{l:embed-inj}
Every abelian group is an essential subgroup of an injective abelian group.
\end{lemma}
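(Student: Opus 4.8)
The plan is to use the classical fact, due to Baer, that an abelian group is injective over the variety of abelian groups precisely when it is \emph{divisible} (every element is divisible by each positive integer), together with the construction of a maximal essential extension inside an ambient divisible group. Thus, given an abelian group $\mathbf{A}$, I would first exhibit an embedding of $\mathbf{A}$ into \emph{some} divisible group $\mathbf{D}$, and then carve out of $\mathbf{D}$ a subgroup $\mathbf{M}$ with $\mathbf{A}\le\mathbf{M}\le\mathbf{D}$ that is an essential extension of $\mathbf{A}$ and is itself divisible; this $\mathbf{M}$ is then the required injective abelian group in which $\mathbf{A}$ sits essentially.

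For the initial embedding, I would write $\mathbf{A}$ as a quotient $\mathbf{F}/\mathbf{K}$ of a free abelian group $\mathbf{F}=\bigoplus_i\mathbf{Z}$; replacing each copy of $\mathbf{Z}$ by a copy of the divisible group $\mathbb{Q}$ embeds $\mathbf{F}$ into the divisible group $\mathbf{F}^{\ast}=\bigoplus_i\mathbb{Q}$, and passing to quotients yields an embedding of $\mathbf{A}=\mathbf{F}/\mathbf{K}$ into $\mathbf{D}:=\mathbf{F}^{\ast}/\mathbf{K}$, which is divisible because quotients of divisible groups are divisible. Next I would invoke Zorn's Lemma on the family of subgroups $\mathbf{N}$ with $\mathbf{A}\le\mathbf{N}\le\mathbf{D}$ and $\mathbf{A}$ essential in $\mathbf{N}$ (this family is closed under unions of chains, since essentiality of $\mathbf{A}$ in $\mathbf{N}$ can be tested on the cyclic subgroups of $\mathbf{N}$) to obtain a maximal such $\mathbf{M}$.

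The crux -- and the step I expect to be the main obstacle -- is showing that this maximal essential extension $\mathbf{M}$ is a direct summand of $\mathbf{D}$, whence $\mathbf{M}$ is divisible (a direct summand of a divisible group is divisible) and the argument is complete. Here I would run the Eckmann--Schopf argument. Using Zorn's Lemma again, choose a subgroup $\mathbf{C}\le\mathbf{D}$ maximal with respect to $C\cap M=\{0\}$. A routine check shows that $(M+C)/C$ is essential in $\mathbf{D}/\mathbf{C}$ and is isomorphic to $\mathbf{M}$ via the quotient map. Since $\mathbf{D}$ is divisible, hence injective, the isomorphism $(M+C)/C\to\mathbf{M}\le\mathbf{D}$ extends along the inclusion $(M+C)/C\hookrightarrow\mathbf{D}/\mathbf{C}$ to a homomorphism $g\colon\mathbf{D}/\mathbf{C}\to\mathbf{D}$; because $(M+C)/C$ is essential in $\mathbf{D}/\mathbf{C}$ and $g$ is injective on it, $g$ is injective, and its image is an essential extension of $\mathbf{M}$, hence, by transitivity of essentiality, an essential extension of $\mathbf{A}$ lying in $\mathbf{D}$ and containing $\mathbf{M}$. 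Maximality of $\mathbf{M}$ forces this image to equal $\mathbf{M}$, which in turn forces $M+C=D$, and therefore $\mathbf{D}=\mathbf{M}\oplus\mathbf{C}$, as desired. The care needed in the two essentiality verifications and in justifying the injectivity-extension step is where the real work lies; the remaining steps are bookkeeping.
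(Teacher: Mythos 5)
Your proposal is correct: it is precisely the classical Eckmann--Schopf construction of the injective hull, combining Baer's characterization of injective abelian groups as the divisible ones, the embedding of an arbitrary abelian group into a divisible one via $\bigoplus_i\mathbb{Q}$, and the Zorn's Lemma argument showing a maximal essential extension inside the ambient divisible group is a direct summand, hence divisible. The paper offers no proof of this lemma---it is quoted as a known result with a citation to Eckmann's 1953 paper---so your argument simply reconstructs the cited source's proof, and all the steps (the chain-closure of the essentiality condition, the essentiality of $(M+C)/C$ in $\mathbf{D}/\mathbf{C}$, and the injectivity of the extended map $g$) check out.
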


For each set $P$ of prime numbers we define the set of quasi-equations
\[
\Sigma_P = \set{x^p \approx 1 \Rightarrow x \approx 1 : p \in P}.
\]
Further, we denote by $\mathsf{Q}_P$ the quasivariety of abelian groups axiomatized by $\Sigma_P$. 

\begin{prop}\label{p:AP-groups}
For every set of prime numbers $P$, the quasivariety $\mathsf{Q}_P$ has the amalgamation property.
\end{prop}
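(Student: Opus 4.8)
The plan is to deduce the amalgamation property from Lemma~\ref{l:EI-AP}. Since $\mathsf{Q}_P$ is a quasivariety it is closed under products, so that lemma reduces the problem to showing that $\mathsf{Q}_P$ has enough injectives: every $\mathbf{G} \in \mathsf{Q}_P$ should embed into some member of $\mathsf{Q}_P$ that is injective over $\mathsf{Q}_P$. Once this is in hand the amalgamation property follows at once. I would begin by recording the transparent reading of the axioms: an abelian group lies in $\mathsf{Q}_P$ precisely when, for each $p \in P$, it has no element of order $p$.

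To produce the required injectives, given $\mathbf{G} \in \mathsf{Q}_P$ I would invoke Lemma~\ref{l:embed-inj} to embed $\mathbf{G}$ as an \emph{essential} subgroup of an injective abelian group $\mathbf{D}$. The crucial claim is that $\mathbf{D}$ still lies in $\mathsf{Q}_P$. Suppose toward a contradiction that some $d \in D$ had order $p$ for a prime $p \in P$. Then $\langle d\rangle$ is a nontrivial subgroup of $\mathbf{D}$, so essentiality gives that $G \cap \langle d\rangle$ is a nontrivial subgroup of $\langle d\rangle$; but $\langle d\rangle \cong \mathbf{Z}_p$ has prime order and hence no proper nontrivial subgroups, forcing $d \in G$. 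This contradicts $\mathbf{G} \in \mathsf{Q}_P$, so $\mathbf{D} \in \mathsf{Q}_P$.

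It then remains to argue that $\mathbf{D}$ is injective \emph{over} $\mathsf{Q}_P$, and here I would use that $\mathsf{Q}_P$ is a full subcategory of the category of abelian groups. Indeed, for any embedding $\alpha\colon \mathbf{B} \to \mathbf{A}$ and homomorphism $\beta\colon \mathbf{B} \to \mathbf{D}$ with $\mathbf{A}, \mathbf{B} \in \mathsf{Q}_P$, the injectivity of $\mathbf{D}$ among all abelian groups supplies a homomorphism $\varphi\colon \mathbf{A} \to \mathbf{D}$ with $\varphi \circ \alpha = \beta$, and $\varphi$ is automatically a morphism of $\mathsf{Q}_P$ since morphisms in $\mathsf{Q}_P$ are just group homomorphisms. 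Thus $\mathbf{D} \in \mathsf{Q}_P$ is injective over $\mathsf{Q}_P$ and receives $\mathbf{G}$ as a subgroup, so $\mathsf{Q}_P$ has enough injectives and Lemma~\ref{l:EI-AP} delivers the amalgamation property.

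The one genuine obstacle is the middle step, namely keeping the injective hull inside $\mathsf{Q}_P$: injectivity alone does not suffice, since a $\mathsf{Q}_P$-group may embed into a divisible group that acquires $p$-torsion (for example $\mathbf{Z}$ embeds into the divisible group $\mathbb{Q} \oplus \mathbb{Z}(p^\infty)$, which lies outside $\mathsf{Q}_{\{p\}}$, via a non-essential embedding). What rescues the argument is exactly the \emph{essentiality} guaranteed by Lemma~\ref{l:embed-inj}: essentiality forces any new element of order $p$ to already live in $\mathbf{G}$, propagating the absence of $p$-torsion from $\mathbf{G}$ up to its hull. This is the only place where a property beyond bare injectivity is needed.
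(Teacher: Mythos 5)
Your proposal is correct and follows essentially the same route as the paper: reduce to ``enough injectives'' via Lemma~\ref{l:EI-AP}, take the essential embedding into an injective abelian group from Lemma~\ref{l:embed-inj}, and use essentiality to rule out $p$-torsion in the hull for $p \in P$. Your additional remarks---that injectivity over all abelian groups restricts to injectivity over the full subcategory $\mathsf{Q}_P$, and the cautionary example showing why essentiality (not mere injectivity) is needed---are points the paper leaves implicit, and they are accurate.
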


\begin{proof}
By Lemma~\ref{l:EI-AP} it is enough to show that $\mathsf{Q}_P$ has enough injectives. Select $\mathbf{G}\in \mathsf{Q}_P$. By Lemma~\ref{l:embed-inj}, $\mathbf{G}$ is an essential subgroup of an injective abelian group $\mathbf{H}$. We want to show that $\mathbf{H} \in \mathsf{Q}_P$. Suppose for a contradiction that $\mathbf{H} \notin \mathsf{Q}_P$. Then there exists a prime $p\in P$ such that $\mathbf{H}\not\models x^p\approx 1 \Rightarrow x \approx 1$, i.e., there exists an $a \in H$ with $a^p = 1$ and  $a\neq 1$. Since the subgroup $\mathbf{S}$ generated by $a$ is a cyclic group of order $p$ and, since $\mathbf{G}$ is an essential subgroup of $\bf H$, there exists $b \in S\cap G$ with $b\neq 1$. Now, since $p$ is prime, $b^p = 1$, so $\mathbf{G} \notin \mathsf{Q}_P$, a contradiction. Hence we get $\mathbf{H} \in \mathsf{Q}_P$. 
\end{proof}

The following lemma shows that in certain situations it is possible to term-define an exponential on an A-algebra.

\begin{lemma}\label{l:girale}
Let $\bf A$ be an A-algebra such that $(a\meet 1)^2 = a \meet 1$ for all $a\in A$, and define $\bang\colon A\to A$ by $\bang a  = a \meet 1$. Then the algebra $\alg{\mathbf{A}, \bang}$ is a girale.
\end{lemma}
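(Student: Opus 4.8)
The plan is to verify the four defining conditions (G1)--(G4) for the operation $\bang a = a\meet 1$ directly, working inside the commutative residuated lattice reduct of $\mathbf{A}$. Three of the four are immediate from the lattice identities alone: (G1) is $1\meet 1 = 1$; (G2) asks $a\meet 1 \leq a\meet 1$, which is trivial; and (G4) unfolds to $(a\meet 1)\meet 1 = a\meet 1$, which follows from associativity and idempotency of $\meet$. So the entire substance of the lemma lies in (G3).

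Rewriting (G3) with the definition of $\bang$, I must establish
\[
(a\meet 1)\cdot(b\meet 1) = (a\meet b)\meet 1.
\]
Since meet is associative, commutative, and idempotent, the right-hand side equals $(a\meet 1)\meet(b\meet 1)$, so the task reduces to showing $x\cdot y = x\meet y$ for the two elements $x = a\meet 1$ and $y = b\meet 1$. Both satisfy $x,y\leq 1$, and conversely every element below $1$ has this form (any $c\leq 1$ equals $c\meet 1$); thus the hypothesis $(a\meet 1)^2 = a\meet 1$ says precisely that every element below $1$ is idempotent, a fact I will use for $x$, $y$, and their meet.

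I would then prove the equality by two inequalities. The inequality $x\cdot y \leq x\meet y$ holds in any commutative residuated lattice: from $x\leq 1$ and $y\leq 1$ together with monotonicity of $\cdot$ we get $x\cdot y\leq x\cdot 1 = x$ and $x\cdot y\leq 1\cdot y = y$, whence $x\cdot y\leq x\meet y$. The reverse inequality is the only place the hypothesis is used, and is the main (if modest) obstacle: setting $z = x\meet y$, we have $z\leq 1$, so $z$ is idempotent; since $z\leq x$ and $z\leq y$, monotonicity of $\cdot$ gives $x\meet y = z = z\cdot z \leq x\cdot y$. Combining the two inequalities yields (G3), completing the verification that $\langle\mathbf{A},\bang\rangle$ is a girale.
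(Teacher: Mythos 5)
Your proposal is correct and follows essentially the same route as the paper: (G1), (G2), and (G4) are dispatched as immediate lattice identities, and (G3) is reduced to the fact that idempotent negative elements multiply as their meet. The paper states this last fact without proof, whereas you supply the standard two-inequality argument (monotonicity for $x\cdot y\leq x\meet y$, and idempotency of $x\meet y$ for the converse), which is a harmless elaboration rather than a different approach.
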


\begin{proof}
We have for every $a \in A$, $1\meet 1 = 1$, $a\meet 1 \leq a \meet 1$, and $(a \meet 1) \meet 1 = a \meet 1$. Hence $\alg{\mathbf{A}, \bang}$,  satisfies (G1), (G2), and (G4). Moreover, for (G3) note that, by assumption, for all $a,b \in A$, $a \meet 1 \leq 1$ and $b\meet 1 \leq 1$ are idempotent, so it follows  that $(a\meet 1)(b\meet 1) =(a\meet 1)\meet (b \meet 1) = (a\meet b) \meet 1$.
\end{proof}

We now introduce an algebraic construction that produces a commutative residuated lattice from an arbitrary abelian group ${\m G}$. The commutative residuated lattices arising from this construction may always be expanded to pointed commutative residuated lattices, A-algebras, and girales.

Let $\mathbf{G} = \alg{G,\cdot,{}^{-1},1}$ be an abelian group. We can consider it as a partially ordered group by defining  $a \leq_G b$ if $a = b$ for $a,b \in G$.\footnote{A \emph{partially ordered group} is a group equipped with a partial order such that the left and right translations are order preserving.}
By adding $\bot \notin G$ and and $\top \notin G$ as a new bottom and top element to $\alg{G,\leq_G}$, respectively, we obtain a bounded lattice $\alg{G\cup\set{\bot,\top},\meet, \join,\bot, \top}$. Figure~\ref{fig:3} depicts  the lattice-order for $\mathbf{G} = \mathbf{Z}_3$.

\begin{figure}
\centering
\begin{tikzpicture}[
place/.style={circle,draw=black,fill=black, minimum size = 4pt, inner sep = 0pt}]
  \node[place] (e) at (-1,0) {};
  \node[place] (a) at (0,0) {};
   \node[place] (a2) at (1,0) {};
  \node[place] (bot) at (0,-1) {};
   \node[place] (top) at (0,1) {};
  \node[left] () at (e) {$1$};
  \node[right] () at (a) {$a$};
   \node[right] () at (a2) {$a^2$};
  \node[below] () at (bot) {$\bot $};
  \node[above] () at (top) {$\top$};
  \draw (bot) -- (e) -- (top);
  \draw (bot) -- (a) -- (top);
  \draw (bot) -- (a2) -- (top);
\end{tikzpicture}
\caption{The lattice expansion for $\mathbf{Z}_3$}
\label{fig:3}
\end{figure}

Now if we extend the multiplication of $\bf G$ to $G\cup \set{\bot,\top}$ by stipulating  $a\cdot \top = \top \cdot a = \top$ for all $a \in G\cup \set{\top}$ and  $b \cdot \bot = \bot \cdot b = \bot$ for all $b\in G\cup \set{\bot,\top}$, we obtain a residuated, commutative, associative binary operation on $\alg{G\cup\set{\bot,\top},\meet, \join}$ with unit $1$. The residual $\to$ is uniquely determined by multiplication together with the lattice-order by the formula
\begin{equation}\label{eq:res}
a\to c = \max \{b : ab\leq c\ \}\tag{R}.
\end{equation}
Thus $\alg{G\cup \set{\bot,\top}, \meet, \join, \cdot, \to,1}$ is a commutative residuated lattice. Moreover, it is easy to see that $1$ is a negation constant, so $\alg{G\cup \set{\bot,\top}, \meet, \join, \cdot, \to,1,1,\bot,\top}$ is an A-algebra.
Also, for every $a \in G\cup \set{\bot,\top}$ we have $a \meet 1 \in \set{\bot,1}$, so clearly for all $a \in G$, we have $(a\meet 1)^2 = a\meet 1$. 
 Hence, by Lemma~\ref{l:girale}, we obtain a girale $\alg{G\cup \set{\bot,\top}, \meet, \join, \cdot, \to,1,1,\bot,\top, \bang}$ by defining $\bang a = a \meet 1$.
For $S \subseteq \set{0,\bot,\top, \bang}$, we denote by $\ext{\mathbf{G}}$ the $\mathcal{RL} \cup S$ reduct of this algebra. For example, for $S = \set{0}$, $\ext{\mathbf{G}}$ is the pointed residuated lattice $\alg{G\cup \set{\bot,\top}, \meet, \join, \cdot,\to,1,1}$. 

Recall that an arbitrary algebra $\m A$ is \emph{simple} if its only congruences are the equality relation $\Delta = \{\alg{x,y}\in A^2 : x = y\}$ and the equivalence relation that identifies all elements $\nabla = A^2$. For any commutative residuated lattice ${\m A}$, we may define another residuated lattice ${\m A}^-$ whose universe is the set of negative elements $A^- = \{a\in A : a\leq 1\}$. The operations $\meet,\join,\cdot,1$ on ${\m A}^-$ are inherited from ${\m A}$, and the residual $\to^-$ of ${\m A}^-$ is defined by the term $a\to^- b = (a\to b)\meet 1$, where $\to$ is the residual of ${\m A}$. The algebra ${\m A}^-$ is called the \emph{negative cone} of $\m A$. From \cite[Lemma 3.49]{GJKO07}, the lattice of congruences of any commutative residuated lattice ${\m A}$ is isomorphic to the lattice of congruences of its negative cone ${\m A}^-$.

The following lemma gives some elementary properties of $\ext{\mathbf{G}}$. 

\begin{lemma}\label{l:basicprop}
Let $\bf G$ be an abelian group and $S \subseteq \set{0,\bot,\top, \bang}$.
\begin{enumerate}[label = \textup{(\roman*)}]
\item $\ext{\mathbf{G}}$ is simple.
\item For every $a \in \extu{\mathbf{G}} \setminus\set{\top}$, $\top \to a = \bot$.
\item For all $a,b \in G$, $a \to b = a^{-1}b$.
\end{enumerate}
\end{lemma}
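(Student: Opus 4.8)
The plan is to dispatch the two residual computations (ii) and (iii) directly from the defining formula~\eqref{eq:res}, and to derive simplicity (i) from the negative-cone congruence correspondence recalled immediately before the lemma.

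First I would prove (iii). Fixing $a,b\in G$, I evaluate $a\to b = \max\set{c : ac\leq b}$ by sorting the candidate $c$ into three kinds. If $c=\top$, then $ac=\top\not\leq b$ since $b\in G$ and $\top$ is the greatest element; if $c=\bot$, then $ac=\bot\leq b$; and if $c\in G$, then because $\leq_G$ is the trivial order (distinct group elements form an antichain), $ac\leq b$ holds exactly when $ac=b$, i.e. when $c=a^{-1}b$. Hence $\set{c : ac\leq b}=\set{\bot, a^{-1}b}$, whose maximum is $a^{-1}b$ (as $\bot\leq a^{-1}b$), giving $a\to b=a^{-1}b$. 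Part (ii) is the same in spirit: for $a\in\extu{\mathbf{G}}\setminus\set{\top}$ one has $\top\cdot c=\top\not\leq a$ for every $c\in G\cup\set{\top}$ (using $\top\not\leq a$ since $a\neq\top$), while $\top\cdot\bot=\bot\leq a$, so $\set{c:\top c\leq a}=\set{\bot}$ and therefore $\top\to a=\bot$.

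For (i) I would use two reductions. First, any congruence of $\ext{\mathbf{G}}$ must in particular respect $\meet,\join,\cdot,\to,1$, regardless of the extra operations coming from $S$; hence the congruences of $\ext{\mathbf{G}}$ are among those of its commutative residuated lattice reduct $\alg{\extu{\mathbf{G}},\meet,\join,\cdot,\to,1}$. Since $\Delta$ and $\nabla$ are always congruences, it suffices to show this reduct is simple. Second, by the isomorphism between the congruence lattice of a commutative residuated lattice and that of its negative cone \cite[Lemma 3.49]{GJKO07}, it is enough to show the negative cone is simple. But the negative elements are precisely those below $1$, and in our poset the only elements $\leq 1$ are $\bot$ and $1$ itself (the remaining group elements are incomparable to $1$ and $\top\not\leq 1$). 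Thus the negative cone is a two-element algebra, which has only the congruences $\Delta$ and $\nabla$. Consequently the reduct, and hence $\ext{\mathbf{G}}$, is simple.

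The only step requiring genuine care is (i), specifically the two passages—from the full expansion $\ext{\mathbf{G}}$ down to its residuated-lattice reduct, and from that reduct down to its negative cone—together with the identification of $\set{\bot,1}$ as the set of negative elements. Parts (ii) and (iii) reduce to routine case analyses once one records that $\leq_G$ is an antichain and that $\bot$ and $\top$ act as annihilators from below and (for non-$\bot$ arguments) from above.
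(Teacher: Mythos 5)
Your proof is correct and follows essentially the same route as the paper: parts (ii) and (iii) by direct case analysis with the residuation formula~(R), and part (i) by passing to the commutative residuated lattice reduct and then to its negative cone $\set{\bot,1}$, which is the two-element (hence simple) residuated lattice. You merely spell out the computations that the paper leaves implicit.
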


\begin{proof}
For (i), we observe that for $S=\emptyset$, the negative cone of $\ext{\mathbf{G}}$ is isomorphic to the two-element residuated lattice. Since this algebra is simple and the congruences of $\ext{\mathbf{G}}$ are in bijective correspondence with the congruences of its negative cone, the result follows for $S=\emptyset$. If $S\neq\emptyset$, we note that a congruence of $\ext{\mathbf{G}}$ is in particular a congruence of its commutative residuated lattice reduct, so the result follows from the previous comments.

For (ii), observe that if $a\neq\top$ then the largest element $b$ such that $b\cdot\top\leq a$ is $\bot$. Thus the claim follows from (\ref{eq:res}). Part (iii) likewise follows from direct computation with (\ref{eq:res}).
\end{proof}

For a set $P\neq \emptyset$ of prime numbers and  $S \subseteq \set{0,\bot,\top, \bang}$ we define the class
\[
\mathsf{K}^S_P = \iso(\set{\ext{\mathbf{G}} : {\m G}\in \mathsf{Q}_P} \cup \set{\mathbf{T}}),
\]
where $\mathbf{T}$ is a trivial algebra in the specified type. Further, we set $\mathsf{V}_P^S = \vr(\mathsf{K}_P^S)$. Moreover, we denote  by $\cllp$, $\cmallp$, $\cflep$, and $\crlep$ the extensions of  $\cll$, $\cmall$, $\cfle$, and $\crle$ corresponding to the varieties $\mathsf{V}_P^{ \set{0,\bot,\top, \bang}}$, $\mathsf{V}_P^{ \set{0,\bot,\top}}$,  $\mathsf{V}_P^{ \set{0}}$, and $\mathsf{V}_P^\emptyset$, respectively.  

Note that in the varieties $\mathsf{V}_P^S$, the operation $0$ is definable by $0 = 1$, $\top$ is definable by $\top = \bot \to 1$, $\bot$ is definable by $\bot = \top \to 1$,  and $\bang$ is definable by $\bang x = x \meet 1$. 
Hence, in what follows we can assume that either $S = \set{\bot,\top}$ or $S = \emptyset$.

\begin{prop}\label{p:HSPU}
For every set of prime numbers $P\neq \emptyset$ and  $S \subseteq \set{0,\bot,\top, \bang}$, the class $\mathsf{K}_P^S$ is a universal class, i.e.,  $\hm\sub\pu(\mathsf{K}_P^S) = \mathsf{K}_P^S$.
\end{prop}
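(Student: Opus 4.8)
The plan is to establish separately that $\mathsf{K}_P^S$ is closed under each of $\hm$, $\sub$, and $\pu$. Since $\mathsf{K}_P^S$ is closed under isomorphisms by definition, these three facts together yield $\hm\sub\pu(\mathsf{K}_P^S) = \mathsf{K}_P^S$: the inclusion $\supseteq$ is immediate, and from $\pu(\mathsf{K}_P^S)=\mathsf{K}_P^S$, $\sub(\mathsf{K}_P^S)=\mathsf{K}_P^S$, and $\hm(\mathsf{K}_P^S)=\mathsf{K}_P^S$ one gets $\hm\sub\pu(\mathsf{K}_P^S)=\hm\sub(\mathsf{K}_P^S)=\hm(\mathsf{K}_P^S)=\mathsf{K}_P^S$. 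Throughout I use the reduction recorded before the statement, so I may assume $S=\set{\bot,\top}$ or $S=\emptyset$, and I freely invoke the defining rule (\ref{eq:res}) together with Lemma~\ref{l:basicprop}. Closure under $\hm$ is then immediate: by Lemma~\ref{l:basicprop}(i) each $\ext{\mathbf{G}}$ is simple, so its only congruences are $\Delta$ and $\nabla$, whence every homomorphic image of $\ext{\mathbf{G}}$ is isomorphic to $\ext{\mathbf{G}}$ itself or to the trivial algebra $\mathbf{T}$; both lie in $\mathsf{K}_P^S$, and homomorphic images of $\mathbf{T}$ are trivial.

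For closure under $\sub$, I would take a subalgebra $\mathbf{B}$ of $\ext{\mathbf{G}}$ and set $B_0 = B\cap G$. Since $1\in B$ and, by Lemma~\ref{l:basicprop}(iii), $a^{-1}=a\to 1$ for $a\in G$, the set $B_0$ is closed under the group operations and is thus the universe of a subgroup $\mathbf{B}_0$ of $\mathbf{G}$; as $\mathsf{Q}_P$ is a quasivariety, $\mathbf{B}_0\in\mathsf{Q}_P$. It then remains to track $\bot,\top$. If $B$ contains no element of $\set{\bot,\top}$, then $B_0$ cannot contain two distinct elements (their meet would be $\bot\notin B$), so $B=B_0=\set{1}$ and $\mathbf{B}\cong\mathbf{T}$. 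Otherwise $B$ meets $\set{\bot,\top}$ — automatically when $S=\set{\bot,\top}$, and forced once $B_0$ has two distinct elements, since any two distinct elements of $G$ meet to $\bot$ and join to $\top$ — and then $\bot\to 1=\top$ together with Lemma~\ref{l:basicprop}(ii) (giving $\top\to 1=\bot$) shows $B$ contains both $\bot$ and $\top$, so $B=B_0\cup\set{\bot,\top}$ and $\mathbf{B}\cong\ext{\mathbf{B}_0}\in\mathsf{K}_P^S$.

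The main obstacle is closure under $\pu$, namely showing that the construction $\mathbf{G}\mapsto\ext{\mathbf{G}}$ commutes with ultraproducts. Let $\set{\mathbf{A}_i}_{i\in I}\subseteq\mathsf{K}_P^S$ and let $\mathcal{U}$ be an ultrafilter on $I$. If $\set{i:\mathbf{A}_i\cong\mathbf{T}}\in\mathcal{U}$ the ultraproduct is trivial; otherwise, discarding a $\mathcal{U}$-small set, I may assume each $\mathbf{A}_i=\ext{\mathbf{G}_i}$ with $\mathbf{G}_i\in\mathsf{Q}_P$. Because $\mathsf{Q}_P$ is a quasivariety it is closed under ultraproducts, so $\mathbf{G}:=\prod_i\mathbf{G}_i/\mathcal{U}\in\mathsf{Q}_P$, and the task reduces to producing an isomorphism $\prod_i\ext{\mathbf{G}_i}/\mathcal{U}\cong\ext{\mathbf{G}}$. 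For the underlying bijection, I would observe that for any class $[(x_i)]$ of the ultraproduct exactly one of the three sets $\set{i:x_i=\top_i}$, $\set{i:x_i=\bot_i}$, $\set{i:x_i\in G_i}$ lies in $\mathcal{U}$, since they partition $I$; the first two options each single out one element (the top and bottom of the ultraproduct), and the group-type classes are in bijection with $\mathbf{G}$, yielding a bijection onto $G\cup\set{\bot,\top}$.

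The crux is then to verify that this bijection preserves all operations. The key observation is that the rules defining $\meet,\join,\cdot$ and $\to$ on each $\ext{\mathbf{G}_i}$ are uniform across the factors — $\bot$ is absorbing for $\cdot$, $a\to b=a^{-1}b$ for group elements, $\top\to a=\bot$ for $a\neq\top$ (Lemma~\ref{l:basicprop}), and distinct group elements meet to $\bot$ and join to $\top$ — so they transfer componentwise to the ultraproduct. Concretely, products and residuals of two group-type elements are computed by the group operations coordinatewise and hence agree with those of $\mathbf{G}$, while the dichotomy governing $\meet$ and $\join$ transfers because two group-type classes coincide in the ultraproduct precisely when they coincide in $\mathbf{G}$; the finitely many remaining cases involving $\bot$ and $\top$ are routine. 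This gives $\prod_i\ext{\mathbf{G}_i}/\mathcal{U}\cong\ext{\mathbf{G}}\in\mathsf{K}_P^S$, completing $\pu(\mathsf{K}_P^S)=\mathsf{K}_P^S$ and, with the previous two closures, the proposition.
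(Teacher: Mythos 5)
Your proof is correct, but it takes a genuinely different route from the paper's on the key point. The paper's strategy is syntactic: it exhibits an explicit axiomatization of $\mathsf{K}_P^{\set{\bot,\top}}$ relative to bounded commutative residuated lattices by the quasi-equations $\Sigma_P$ together with four universal sentences (and, for $S=\emptyset$, adapts these using the definability of $\bot$ and $\top$ by universal formulas), so that closure under $\pu$ follows from first-order axiomatizability via \L{}o\'s's theorem; closure under $\hm\sub$ is then dispatched exactly as you do, via simplicity (Lemma~\ref{l:basicprop}(i)) and closure of $\mathsf{Q}_P$ under subalgebras. You instead verify closure under $\hm$, $\sub$, and $\pu$ directly, the heart being a hands-on proof that the construction $\mathbf{G}\mapsto\ext{\mathbf{G}}$ commutes with ultraproducts: the three-way partition of indices classifying each class as $\bot$-type, $\top$-type, or group-type, followed by a coordinatewise check of the operations. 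Both arguments are sound, and your treatment of subalgebras (including the degenerate one-element subalgebra when $S=\emptyset$) is careful and complete. What the paper's approach buys is economy and reusability: once the universal axioms are written down, closure under $\sub$ and $\pu$ in the bounded signature comes for free from the preservation theorem, and the axiomatization itself is a transparent record of what $\mathsf{K}_P^S$ is. What your approach buys is self-containedness: you avoid having to verify that the listed sentences really do axiomatize the class (which the paper leaves as ``straightforward to check''), you sidestep the slightly delicate issue that the adapted sentences for $S=\emptyset$ are no longer literally universal, and you establish the somewhat stronger structural fact that $\ext{-}$ commutes with ultraproducts --- at the cost of a longer case analysis.
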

\begin{proof}
Observe that $\mathsf{K}^{\set{\bot,\top}}_P$ is axiomatized relative to bounded commutative residuated lattices by the set of quasi-equations $\Sigma_P$ together the universal sentences 
\begin{align}
(\forall x)( (x\not\approx \bot)\andd (x\not\approx \top)   \implies (x(x\to 1) \approx 1)), \\
(\forall x)(\forall y) ((x\not\approx \bot) \andd (y \not \approx \bot) \andd (x\not \approx y) \implies (x\join y \approx \top)), \\
(\forall x)(\forall y) ((x\not\approx \top) \andd (y \not \approx \top) \andd (x\not \approx y) \implies (x\meet y \approx \bot)),\\
(\forall x)((x \not\approx \bot) \implies (x\cdot \top \approx \top)).
\end{align}
To see this let $\bf A$ be a non-trivial algebra satisfying these axioms. Then,  by (1), the set $A\setminus \set{\bot,\top}$ gives rise to an abelian group $\mathbf{G}_A$ with $a^{-1} = a\to 1$ for $a\in A \setminus \set{\bot,\top}$. Moreover, since $\bf A$ satisfies $\Sigma_P$, also $\mathbf{G}_A$ satisfies $\Sigma_P$, i.e., $\mathbf{G}_A \in \mathsf{Q}_P$.  Now, using (2), (3), and (4), it is straightforward to check that $\mathbf{A}$ is isomorphic to $\ext{\mathbf{G}_A}$.

For $\mathsf{K}^{\emptyset}_P$ we note that $\bot$ and $\top$ are definable by the formulas $\phi_\bot(x) = (\forall y)(x\meet y \approx x)$ and $\phi_\top(x) = (\forall y)(x \join y \approx x)$. So, the above sentences can be adapted to get an axiomatization of  $\mathsf{K}^{\emptyset}_P$, i.e., $\mathsf{K}^{\emptyset}_P$ is closed under $\pu$. Closure under $\hm\sub$ follows from Lemma~\ref{l:basicprop} (i) and the fact that $\mathsf{Q}_P$ is closed under subalgebras.
\end{proof}

\begin{cor}\label{c:ISP}
Let $P\neq \emptyset$ be a set of prime numbers and $S \subseteq \set{0,\bot,\top, \bang}$. Then:
\begin{enumerate}[label = \textup{(\roman*)}]
\item $\mathsf{V}_P^S = \iso\sub\pr(\mathsf{K}_P^S)$.
\item The class of finitely subdirectly irreducible members of $\mathsf{V}_P^S$ is exactly $\mathsf{K}_P^S$.
\end{enumerate}
\end{cor}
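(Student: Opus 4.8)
The plan is to derive both parts from Jónsson's Lemma, exploiting the fact recorded earlier that $\RL$, $\PRL$, $\GA$, and $\GAM$ are arithmetical, hence congruence-distributive; this property is inherited by any subvariety, in particular by $\mathsf{V}_P^S = \vr(\mathsf{K}_P^S)$. The only other input needed is that $\mathsf{K}_P^S$ is closed under $\hm\sub\pu$, which is precisely Proposition~\ref{p:HSPU}. Since $\mathsf{V}_P^S$ is congruence-distributive, Jónsson's Lemma (in both its subdirectly irreducible and its finitely subdirectly irreducible forms) guarantees that the (finitely) subdirectly irreducible members of $\mathsf{V}_P^S$ lie in $\hm\sub\pu(\mathsf{K}_P^S)$, and by Proposition~\ref{p:HSPU} this class is just $\mathsf{K}_P^S$ itself. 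This single observation does most of the work.

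I would prove (ii) first, since (i) follows from it quickly. For the inclusion $\mathsf{K}_P^S \subseteq (\mathsf{V}_P^S)_{\mathrm{FSI}}$, I would use Lemma~\ref{l:basicprop}(i): each generator $\ext{\mathbf{G}}$ is simple and therefore finitely subdirectly irreducible, while the trivial algebra $\mathbf{T}$ is finitely subdirectly irreducible as well, since its only congruence $\Delta=\nabla$ is vacuously meet-irreducible; all of these belong to $\mathsf{V}_P^S$. For the reverse inclusion, the finitely-subdirectly-irreducible form of Jónsson's Lemma yields $(\mathsf{V}_P^S)_{\mathrm{FSI}} \subseteq \hm\sub\pu(\mathsf{K}_P^S)$, and Proposition~\ref{p:HSPU} identifies the right-hand side with $\mathsf{K}_P^S$. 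Combining the two inclusions gives (ii).

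Part (i) then follows from Birkhoff's subdirect representation theorem. On the one hand, $\mathsf{K}_P^S \subseteq \mathsf{V}_P^S$ and $\mathsf{V}_P^S$ is a variety, hence closed under $\iso$, $\sub$, and $\pr$, so $\iso\sub\pr(\mathsf{K}_P^S) \subseteq \mathsf{V}_P^S$. On the other hand, any $\m A \in \mathsf{V}_P^S$ is isomorphic to a subdirect product of its subdirectly irreducible quotients $\m A_i$ (with the trivial algebra being a degenerate case already lying in $\mathsf{K}_P^S$); each $\m A_i$ is subdirectly irreducible, hence finitely subdirectly irreducible, and so belongs to $\mathsf{K}_P^S$ by (ii). Thus $\m A$ embeds into $\prod_i \m A_i$ with every factor in $\mathsf{K}_P^S$, giving $\m A \in \iso\sub\pr(\mathsf{K}_P^S)$. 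Hence $\mathsf{V}_P^S = \iso\sub\pr(\mathsf{K}_P^S)$.

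I expect the only genuine subtlety to be pinning down the correct form of Jónsson's Lemma: the classical statement concerns subdirectly irreducible algebras, whereas part (ii) requires its finitely-subdirectly-irreducible refinement, whose validity rests on congruence-distributivity (supplied by arithmeticity). The rest is bookkeeping—verifying that the trivial algebra counts as finitely subdirectly irreducible under the definition adopted here, and matching the output $\hm\sub\pu(\mathsf{K}_P^S)$ of Jónsson's Lemma to the universal class $\mathsf{K}_P^S$ via Proposition~\ref{p:HSPU}.
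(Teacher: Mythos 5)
Your proposal is correct, and it rests on the same pillars as the paper's proof (J\'onsson's Lemma, Proposition~\ref{p:HSPU}, Birkhoff's subdirect representation theorem, and the simplicity of the generators), but it routes the argument in the opposite direction. You prove (ii) first by invoking the finitely-subdirectly-irreducible refinement of J\'onsson's Lemma, which requires congruence distributivity --- correctly supplied here by the arithmeticity of $\RL$, $\PRL$, $\GA$, and $\GAM$ and inherited by the subvariety $\mathsf{V}_P^S$ --- and then deduce (i) from (ii) via Birkhoff. The paper instead proves (i) first, using only the classical (subdirectly irreducible) form of J\'onsson's Lemma plus Proposition~\ref{p:HSPU} and Birkhoff, and then obtains (ii) from the identity $\mathsf{V}_P^S = \iso\sub\pr(\mathsf{K}_P^S)$ together with a quasivariety-theoretic lemma of Czelakowski--Dziobiak asserting that every finitely subdirectly irreducible member of $\iso\sub\pr(\mathsf{K}_P^S)$ lies in $\iso\sub\pu(\mathsf{K}_P^S)$. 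Your route is self-contained modulo the strong form of J\'onsson's Lemma and makes the dependence on congruence distributivity explicit; the paper's route trades that for an external lemma about $\iso\sub\pr$-classes. Your side remarks are also accurate: under the paper's definition of meet-irreducibility the trivial algebra does count as finitely subdirectly irreducible, and your reduction of $\hm\sub\pu(\mathsf{K}_P^S)$ to $\mathsf{K}_P^S$ is exactly Proposition~\ref{p:HSPU}. Both arguments are sound; the difference is essentially which refinement of the generation theorem one prefers to cite.
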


\begin{proof}
For (i), we observe that by Jónsson's Lemma together with Proposition~\ref{p:HSPU}, every subdirectly irreducible member of $\mathsf{V}_P^S$ is contained in $\mathsf{K}_P^S$. Hence, since every algebra in $\mathsf{V}_P^S$ is a isomorphic to a subdirect product of subdirectly irreducibles, we get $\mathsf{V}_P^S = \iso\sub\pr(\mathsf{K}_P^S)$. 
For part (ii) note that, since $\mathsf{V}_P^S = \iso\sub\pr(\mathsf{K}_P^S)$ by part (i), it follows from \cite[Lemma 1.5]{CD90} that every finitely subdirectly irreducible member of $\mathsf{V}_P^S$ is contained in $\iso\sub\pu(\mathsf{K}_P^S)$ and, by Proposition~\ref{p:HSPU}, we have $\iso\sub\pu(\mathsf{K}_P^S) = \mathsf{K}_P^S$. On the other hand every member of $\mathsf{K}_P^S$ is either trivial or simple, so in particular finitely subdirectly irreducible.
\end{proof}

\begin{lemma}\label{l:2}
Let $\mathbf{G}$ and $\mathbf{H}$ be abelian groups and $S \subseteq \set{0,\bot,\top, \bang}$.
\begin{enumerate}[label = \textup{(\roman*)}]
\item Every embedding $\alpha\colon \mathbf{G} \to \mathbf{H}$ uniquely extends to an embedding $\beta \colon \ext{\mathbf{G}} \to \ext{\mathbf{H}}$.
\item Every embedding $\beta\colon  \ext{\mathbf{G}} \to \ext{\mathbf{H}}$ restricts to an embedding $\alpha\colon \mathbf{G} \to \mathbf{H}$.
\end{enumerate}
\end{lemma}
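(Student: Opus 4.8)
The plan is to establish the two parts of Lemma~\ref{l:2} by carefully tracking how the construction $\ext{-}$ interacts with group embeddings, recalling that $\extu{\mathbf{G}}$ has universe $G\cup\set{\bot,\top}$ with the group elements forming an antichain between the new bounds, and that by Lemma~\ref{l:basicprop}(iii) the residual restricted to $G$ recovers the group operation via $a\to b = a^{-1}b$, while by Lemma~\ref{l:basicprop}(ii) the element $\top$ is detected by the equation $\top\to 1 = \bot$. Since the remark following the definition of $\mathsf{K}_P^S$ shows that $0,\top,\bot,\bang$ are all term-definable, it suffices to treat the case $S=\set{\bot,\top}$; the constants and the unary $\bang$ are then automatically preserved because they are given by the same terms on both sides.

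For part (i), I would define $\beta\colon\extu{\mathbf{G}}\to\extu{\mathbf{H}}$ by setting $\beta(a)=\alpha(a)$ for $a\in G$, $\beta(\bot)=\bot$, and $\beta(\top)=\top$. This map is injective since $\alpha$ is and since $\bot,\top$ lie outside the image of $G$. The work is to verify it is a homomorphism for each operation. Lattice operations are preserved because $\beta$ maps the antichain $G$ into the antichain $H$ while fixing the bounds, so joins and meets of distinct group elements go to $\top$ and $\bot$ respectively, matching the target; multiplication is preserved by splitting into cases according to whether the arguments are group elements (where $\beta\alpha$ is a group homomorphism) or bounds (where the absorbing behaviour of $\top$ and $\bot$ is preserved), and the residual $\to$ is then handled either by invoking that it is uniquely determined by $\cdot$ and $\leq$ via~(\ref{eq:res}), or directly via Lemma~\ref{l:basicprop}(iii) on $G$ together with~(\ref{eq:res}) for the boundary cases. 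Uniqueness of $\beta$ follows because any extension must fix $\bot$ and $\top$ (these are the bounds, hence determined by the order) and must agree with $\alpha$ on $G$.

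For part (ii), given an embedding $\beta\colon\extu{\mathbf{G}}\to\extu{\mathbf{H}}$, the key point is that $\beta$ must send $\bot\mapsto\bot$ and $\top\mapsto\top$ and must send $G$ into $H$, so that the restriction $\alpha=\beta\restriction_G$ lands in $\mathbf{H}$ and is a group embedding. Since $\beta$ preserves the order, it sends the bottom and top of $\extu{\mathbf{G}}$ to the bottom and top of the image; I would argue that $\beta$ in fact preserves the global bounds, for instance by noting $\bot,\top$ are term-definable (as in Proposition~\ref{p:HSPU}, $\phi_\bot$ and $\phi_\top$, or simply as the $\bot,\top$ constants in the signature $S=\set{\bot,\top}$), whence they are preserved outright. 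Consequently every group element $a\in G$, being neither $\bot$ nor $\top$, satisfies $\beta(a)\to 1 = \beta(a)^{-1}\neq\bot$ by Lemma~\ref{l:basicprop}(ii)--(iii), forcing $\beta(a)\in H$. Then $\alpha$ is injective as a restriction of an injection, and it is a group homomorphism because multiplication on $G$ is inherited from $\extu{\mathbf{G}}$ and preserved by $\beta$, with the inverse recovered via $a^{-1}=a\to 1$.

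The main obstacle, and the step deserving the most care, is verifying preservation of the residual $\to$ and confirming that $\beta$ cannot map a group element to a bound (and vice versa); this is where Lemma~\ref{l:basicprop}(ii)--(iii) does the real work, pinning down the group elements intrinsically as those $a$ with $a\to 1 \neq \bot$ (equivalently $a\neq\top$) together with the bound-detection, so that the combinatorial structure of $\extu{\mathbf{G}}$ rigidly determines which elements are group elements and which are the adjoined bounds.
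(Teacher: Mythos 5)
Your overall strategy (define $\beta$ to fix the bounds and act as $\alpha$ on $G$, verify the lattice and monoid operations case by case, and compute the residual from Lemma~\ref{l:basicprop}(ii)--(iii) and (\ref{eq:res})) is exactly the paper's, and that part is fine. But there is a genuine gap in your opening reduction: it does \emph{not} suffice to treat $S=\set{\bot,\top}$. The paper's remark only says that $0$ and $\bang$ are term-definable in the basic signature, while $\top$ is defined by the term $\bot\to 1$ and $\bot$ by $\top\to 1$ --- each in terms of the \emph{other} bound. So the cases collapse to two, $S=\set{\bot,\top}$ and $S=\emptyset$, not one, and $S=\emptyset$ is precisely the case needed for $\crle$ (and, up to term-equivalence, for $\cfle$). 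Your fallback for that case --- that $\bot,\top$ are ``term-definable'' via $\phi_\bot$ and $\phi_\top$ --- does not work: those are universally quantified first-order formulas, not terms, and embeddings do not preserve such formulas in the forward direction (the minimum of a subalgebra need not be the minimum of the ambient algebra). Consequently, in part (ii) with $S=\emptyset$ you have no license to assume $\beta(\bot)=\bot$ and $\beta(\top)=\top$, and your subsequent step ``$\beta(a)\to 1=\beta(a)^{-1}$'' presupposes $\beta(a)\in H$, which is what is to be shown.

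The missing argument is short: by Lemma~\ref{l:basicprop}(iii) the elements of $G$ are exactly the invertible elements of $\extu{\mathbf{G}}$ (since $a\cdot(a\to 1)=1$), invertibility is preserved by any homomorphism fixing $1$, and neither $\bot$ nor $\top$ is invertible in $\extu{\mathbf{H}}$; hence $\beta[G]\subseteq H$ and the restriction is a group embedding. From this one also recovers $\beta(\bot)=\beta(a\meet 1)=\beta(a)\meet 1=\bot$ and $\beta(\top)=\beta(a)\join 1=\top$ for any $a\in G\setminus\set{1}$, with the trivial-group case handled by noting that $\beta(\bot)<1<\beta(\top)$ forces the bounds. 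A smaller point: in part (i), ``$\to$ is preserved because it is uniquely determined by $\cdot$ and $\leq$ via (\ref{eq:res})'' is not by itself an argument (lattice-ordered monoid embeddings need not preserve residuals); your alternative of computing $\to$ explicitly on all cases, as the paper does, is the one to keep.
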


\begin{proof}
For (i), let $\alpha\colon \mathbf{G} \to \mathbf{H}$ be an embedding, define  $\beta \colon \mathrm{R}^S(\mathbf{G}) \to \mathrm{R}^S(\mathbf{H})$ by $\beta(a) = \alpha(a)$ for $a\in G$ and $\beta(\top) = \top$, $\beta(\bot) = \bot$. Then clearly $\beta$ is a (bounded) lattice homomorphism and a monoid homomorphism. But also, by Lemma~\ref{l:basicprop}  (ii) and (iii),  $\top \to a = \bot$ for $a \in G\cup \set{\top}$ and $a\to b = a^{-1}b$ for $a,b \in G$. Moreover,  $\bot \to a = \top$ for $a \in G\cup\set{\bot,\top}$ and $a \to \bot = \bot$ for $a \in G\cup \set{\top}$. The same also holds for $\ext{\mathbf{H}}$. Hence, since $\alpha$ is a group homomorphism, $\beta$ preserves $\to$. That $\beta$ is the unique extension is clear, since any embedding from $\ext{\mathbf{G}}$ to $\ext{\mathbf{H}}$ needs to map $\bot$ to $\bot$ and $\top$ to $\top$. Part (ii) is immediate from Lemma~\ref{l:basicprop} (iii).
\end{proof}

\begin{thm}\label{t:ap}
For every set of prime numbers $P \neq \emptyset$ and $S \subseteq \set{0,\bot,\top, \bang}$, the variety $\mathsf{V}_P^S$ has the amalgamation property. 
\end{thm}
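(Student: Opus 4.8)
The plan is to apply Theorem~\ref{t:lift amalg} to $\mathsf{V}_P^S$. Since $0$, $\top$, $\bot$, and $\bang$ are all term-definable in $\mathsf{V}_P^S$, it suffices to treat the cases $S=\set{\bot,\top}$ and $S=\emptyset$. In either case $\mathsf{V}_P^S$ is a subvariety of one of $\RL$, $\PRL$, $\GA$, or $\GAM$, each of which has the congruence extension property; as this property is inherited by subvarieties, $\mathsf{V}_P^S$ has it too. By Corollary~\ref{c:ISP}(ii) the finitely subdirectly irreducible members of $\mathsf{V}_P^S$ are exactly the algebras in $\mathsf{K}_P^S$, and by Proposition~\ref{p:HSPU} this class is universal and so in particular closed under subalgebras. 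Thus the hypotheses of Theorem~\ref{t:lift amalg} reduce to showing that $\mathsf{K}_P^S$ has the amalgamation property, and this is where the real work lies.

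To this end I would take an arbitrary span $\alg{\mathbf{A},\mathbf{B},\mathbf{C},\phi_1,\phi_2}$ in $\mathsf{K}_P^S$ and construct an amalgam. The principal case is when $\mathbf{A}$, $\mathbf{B}$, $\mathbf{C}$ are all nontrivial, say $\mathbf{A}=\ext{\mathbf{G}_A}$, $\mathbf{B}=\ext{\mathbf{G}_B}$, $\mathbf{C}=\ext{\mathbf{G}_C}$ with $\mathbf{G}_A,\mathbf{G}_B,\mathbf{G}_C\in\mathsf{Q}_P$. Lemma~\ref{l:2}(ii) restricts $\phi_1,\phi_2$ to group embeddings $\alpha_1\colon\mathbf{G}_A\to\mathbf{G}_B$ and $\alpha_2\colon\mathbf{G}_A\to\mathbf{G}_C$, yielding a span in $\mathsf{Q}_P$. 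Proposition~\ref{p:AP-groups} then supplies $\mathbf{D}_0\in\mathsf{Q}_P$ and embeddings $\psi_1\colon\mathbf{G}_B\to\mathbf{D}_0$, $\psi_2\colon\mathbf{G}_C\to\mathbf{D}_0$ with $\psi_1\circ\alpha_1=\psi_2\circ\alpha_2$, and Lemma~\ref{l:2}(i) lifts these to embeddings $\Psi_1\colon\mathbf{B}\to\ext{\mathbf{D}_0}$ and $\Psi_2\colon\mathbf{C}\to\ext{\mathbf{D}_0}$, where $\ext{\mathbf{D}_0}\in\mathsf{K}_P^S$ since $\mathbf{D}_0\in\mathsf{Q}_P$.

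The crux is to verify $\Psi_1\circ\phi_1=\Psi_2\circ\phi_2$, and I expect this to be the main obstacle. The argument I would use invokes the uniqueness clause of Lemma~\ref{l:2}(i): both composites are embeddings of $\ext{\mathbf{G}_A}$ into $\ext{\mathbf{D}_0}$, and their restrictions to $\mathbf{G}_A$ (via Lemma~\ref{l:2}(ii)) both equal $\psi_1\circ\alpha_1=\psi_2\circ\alpha_2$; since such an embedding is the unique extension of its restriction to the group, the two composites coincide. This produces the amalgam $\alg{\Psi_1,\Psi_2,\ext{\mathbf{D}_0}}$. The remaining cases, where some of $\mathbf{A},\mathbf{B},\mathbf{C}$ are trivial, are routine. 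When $S=\set{\bot,\top}$ a nontrivial $\ext{\mathbf{G}}$ contains the three distinct constants $1,\bot,\top$ and hence has no one-element subalgebra, so a trivial $\mathbf{A}$ can embed only into trivial $\mathbf{B},\mathbf{C}$ and the trivial algebra is an amalgam. When $S=\emptyset$ the singleton $\set{1}$ is a subalgebra of every $\ext{\mathbf{G}}$, so a trivial $\mathbf{A}$ may embed into nontrivial $\mathbf{B},\mathbf{C}$; but then $\phi_1,\phi_2$ carry the unique element to the units, commutativity of any candidate is automatic, and it suffices to exhibit a common extension of $\mathbf{B}$ and $\mathbf{C}$ in $\mathsf{K}_P^\emptyset$, obtained by amalgamating the underlying groups over the trivial group (Proposition~\ref{p:AP-groups}) and lifting via Lemma~\ref{l:2}(i). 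With the amalgamation property of $\mathsf{K}_P^S$ in hand, Theorem~\ref{t:lift amalg} yields it for $\mathsf{V}_P^S$.
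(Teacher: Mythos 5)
Your proposal is correct and follows essentially the same route as the paper: reduce to the amalgamation property of $\mathsf{K}_P^S$ via Theorem~\ref{t:lift amalg} (using Corollary~\ref{c:ISP}(ii), Proposition~\ref{p:HSPU}, and the congruence extension property), amalgamate the group subreducts with Proposition~\ref{p:AP-groups}, and lift back with Lemma~\ref{l:2}, treating the trivial-algebra cases separately for $S=\set{\bot,\top}$ versus $S=\emptyset$. Your explicit verification of $\Psi_1\circ\phi_1=\Psi_2\circ\phi_2$ via the uniqueness clause of Lemma~\ref{l:2}(i) spells out a step the paper leaves implicit, but it is the same argument.
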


\begin{proof}
By Corollary~\ref{c:ISP}(ii), the class of finitely subdirectly members of $\mathsf{V}_P^S$ is exactly $\mathsf{K}_P^S$ and, by Proposition~\ref{p:HSPU}, it is closed under subalgebras. Since $\mathsf{V}_P^S$ is term-equivalent to a variety of (possibly bounded or pointed) commutative residuated lattices, $\mathsf{V}_P^S$ has the congruence extension property, by \cite[Lemma 3.57]{GJKO07}. Therefore, from Theorem~\ref{t:lift amalg} it suffices to show that $\mathsf{K}_P^S$ has the amalgamation property. 

It follows from Lemma~\ref{l:2} together with Proposition~\ref{p:AP-groups} that any span in $\mathsf{K}_P^S$ that does not contain a trivial algebra has an amalgam in $\mathsf{K}_P^S$, since we can lift the amalgam of the span of the group subreducts. If $S = \set{\bot,\top}$, a trivial algebra in $\mathsf{K}_P^S$ does only embed into trivial algebras, so we are done. If $S = \emptyset$ we note that in every span in $\mathsf{K}_P^S$ that contains trivial algebras, we can replace the trivial algebras with the algebra $\ext{\mathbf{0}}$ where $\bf 0$ is a trivial abelian group and extend the embeddings accordingly. Hence the claim also follows for $S = \emptyset$.
\end{proof}

By Theorem~\ref{t:DIP-AP} we get the following result about the corresponding axiomatic extensions.
\begin{cor}
For every set of prime numbers $P\neq \emptyset$, each of $\cllp$, $\cmallp$, $\cflep$, and $\crlep$ has the deductive interpolation property.
\end{cor}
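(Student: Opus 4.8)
The plan is to obtain the result by transporting the amalgamation property of the varieties $\mathsf{V}_P^S$ established in Theorem~\ref{t:ap} across the algebraization bridge, using the chain of results assembled in Section~\ref{sec:deductive systems}. First I would observe that each of $\cllp$, $\cmallp$, $\cflep$, and $\crlep$ is by definition an axiomatic extension of one of the strongly algebraizable systems $\cll$, $\cmall$, $\cfle$, $\crle$ (Theorem~\ref{t:algebraizable}). A standard fact of abstract algebraic logic is that an axiomatic extension of a strongly algebraizable system is again strongly algebraizable, with the same defining equation $\delta(x)$ and equivalence formulas $\Delta(x,y)$, and with equivalent algebraic semantics the subvariety corresponding to the extension under Proposition~\ref{prop:isomorphism}. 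By the way these extensions were defined, the relevant subvarieties are precisely $\mathsf{V}_P^{\set{0,\bot,\top,\bang}}$, $\mathsf{V}_P^{\set{0,\bot,\top}}$, $\mathsf{V}_P^{\set{0}}$, and $\mathsf{V}_P^{\emptyset}$, respectively.

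Next I would equip each of these extensions with a local deduction theorem so as to bring Theorem~\ref{t:DIP-AP} into play. As already noted in the proof of Theorem~\ref{t:ap}, each variety $\mathsf{V}_P^S$ is term-equivalent to a variety of (possibly bounded or pointed) commutative residuated lattices and therefore enjoys the congruence extension property. Hence Theorem~\ref{t:Blok1991}, applied to each extension together with its equivalent algebraic semantics $\mathsf{V}_P^S$, yields a local deduction theorem for it.

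With these ingredients in hand, the conclusion follows by a single application of Theorem~\ref{t:DIP-AP}: each of $\cllp$, $\cmallp$, $\cflep$, and $\crlep$ is strongly algebraizable, has a local deduction theorem, and has equivalent algebraic semantics one of the varieties $\mathsf{V}_P^S$; consequently it has the deductive interpolation property if and only if that variety has the amalgamation property. Since Theorem~\ref{t:ap} guarantees that every $\mathsf{V}_P^S$ with $P\neq\emptyset$ has the amalgamation property, all four extensions have the deductive interpolation property.

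There is no substantive obstacle remaining at this stage, as all of the genuine difficulty has already been absorbed into the construction and amalgamation argument of Theorem~\ref{t:ap}. The only points requiring care are bookkeeping ones: checking that the defining data of algebraization is inherited verbatim by axiomatic extensions, so that the hypotheses of Theorems~\ref{t:Blok1991} and~\ref{t:DIP-AP} are genuinely met, and correctly matching each extension to the variety $\mathsf{V}_P^S$ that serves as its equivalent algebraic semantics. Both are immediate from Proposition~\ref{prop:isomorphism} and the definitions of the four extensions.
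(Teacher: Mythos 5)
Your proposal is correct and follows exactly the route the paper takes: the paper derives the corollary in one line from Theorem~\ref{t:ap} via Theorem~\ref{t:DIP-AP}, with the algebraizability of the extensions, the identification of their equivalent algebraic semantics with $\mathsf{V}_P^S$, and the local deduction theorem via the congruence extension property all left implicit. You have merely made that bookkeeping explicit, which is a faithful (and slightly more careful) rendering of the same argument.
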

The previous proposition shows that the extension we have identified have the \prp{DIP}. It remains to show that there are continuum-many of them.
\begin{prop}\label{p:primes distinct}
For two non-empty sets  $P$ and $P'$ of prime numbers with $P\neq P'$ and  $S \subseteq \set{0,\bot,\top, \bang}$  we have $\mathsf{V}_P^S \neq \mathsf{V}_{P'}^S$. 
\end{prop}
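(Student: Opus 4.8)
The plan is to distinguish the varieties $\mathsf{V}_P^S$ and $\mathsf{V}_{P'}^S$ by exhibiting, for a prime $p$ lying in the symmetric difference of $P$ and $P'$, a quasi-equation or equation that holds throughout one variety but fails in the other. Without loss of generality I would assume there is a prime $p \in P' \setminus P$, so that the defining condition $x^p \approx 1 \Rightarrow x \approx 1$ is \emph{not} imposed on the group subreducts of algebras in $\mathsf{V}_P^S$, while it \emph{is} imposed on those in $\mathsf{V}_{P'}^S$. The guiding idea is that $\mathsf{V}_P^S$ contains a generating algebra $\mathbf{R}^S(\mathbf{Z}_p)$ built from the cyclic group of order $p$ (since $\mathbf{Z}_p \in \mathsf{Q}_P$ exactly when $p \notin P$), whereas no algebra in $\mathsf{V}_{P'}^S$ can contain a $p$-torsion element in its group part.

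First I would pin down an equation, call it $\epsilon_p$, that is satisfied by every algebra of the form $\mathbf{R}^S(\mathbf{G})$ with $\mathbf{G} \in \mathsf{Q}_{P'}$ and by the trivial algebra, but that fails in $\mathbf{R}^S(\mathbf{Z}_p)$. The natural candidate expresses, in the residuated-lattice signature, that an element which is a nontrivial $p$-th root of $1$ cannot exist; concretely one can use the $p$-th power in the monoid together with the meet with $1$. Using Lemma~\ref{l:basicprop}(iii), multiplication among group elements of $\mathbf{R}^S(\mathbf{G})$ mirrors the group operation, so the $p$-torsion condition on $\mathbf{G}$ translates faithfully into a first-order property of $\mathbf{R}^S(\mathbf{G})$. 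The key point is to phrase this as an honest \emph{equation} (or a finite scheme of equations) so that it is preserved under $\hm$, $\sub$, and $\pr$, and hence holds on all of $\mathsf{V}_{P'}^S = \vr(\mathsf{K}_{P'}^S)$ as soon as it holds on the generators $\mathsf{K}_{P'}^S$.

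Next I would verify the two halves. That $\epsilon_p$ holds on $\mathsf{K}_{P'}^S$ follows because each nontrivial generator is some $\mathbf{R}^S(\mathbf{G})$ with $\mathbf{G}$ satisfying $x^p \approx 1 \Rightarrow x \approx 1$, and the element $\bot$ and $\top$ behave as computed in the construction (their powers and meets with $1$ are controlled, e.g. $\bot \meet 1 = \bot$ and $\top \meet 1 = 1$), so no element witnesses a nontrivial $p$-th root of $1$. Consequently $\epsilon_p$ holds throughout $\mathsf{V}_{P'}^S$. On the other hand, in $\mathbf{R}^S(\mathbf{Z}_p) \in \mathsf{K}_P^S \subseteq \mathsf{V}_P^S$ a generator $a$ of $\mathbf{Z}_p$ satisfies $a^p = 1$ with $a \neq 1$, so it witnesses the failure of $\epsilon_p$. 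Therefore $\epsilon_p$ separates the two varieties and $\mathsf{V}_P^S \neq \mathsf{V}_{P'}^S$.

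The main obstacle I anticipate is the bookkeeping required to express $\epsilon_p$ purely equationally rather than as a quasi-equation, and to confirm it is genuinely preserved under products — since a product of generators is no longer of the form $\mathbf{R}^S(\mathbf{G})$, one must ensure the chosen equation is a true identity of $\mathsf{K}_{P'}^S$ and not merely a quasi-identity that could fail in $\pr$. If a clean single equation proves awkward, the fallback is to invoke Corollary~\ref{c:ISP}(ii): the finitely subdirectly irreducible members of each variety are exactly the respective generating classes $\mathsf{K}_P^S$ and $\mathsf{K}_{P'}^S$, and since $\mathbf{R}^S(\mathbf{Z}_p)$ is simple (Lemma~\ref{l:basicprop}(i)) it lies in $\mathsf{V}_P^S$ but cannot lie in $\mathsf{V}_{P'}^S$ (its group reduct violates $\Sigma_{P'}$, contradicting the explicit description of $\mathsf{K}_{P'}^S$). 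This structural argument sidesteps the need to write down $\epsilon_p$ explicitly and is likely the cleanest route.
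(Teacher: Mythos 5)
Your fallback argument is precisely the paper's proof: the paper picks a prime in the symmetric difference of $P$ and $P'$, observes that $\ext{\mathbf{Z}_p}$ lies in one generating class but not the other, and concludes via Proposition~\ref{p:HSPU} and J\'onsson's Lemma (equivalently, Corollary~\ref{c:ISP}(ii) together with the simplicity of $\ext{\mathbf{Z}_p}$ from Lemma~\ref{l:basicprop}(i)) that $\ext{\mathbf{Z}_p}$ belongs to exactly one of the two varieties. You were right to call this the cleanest route and to distrust your primary plan: you never actually exhibit the identity $\epsilon_p$, and the natural candidate is a quasi-equation, which is not guaranteed to survive passage to homomorphic images of the generators, so the structural argument is the one to keep.
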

\begin{proof}
If $P \neq P'$, then without loss of generality there is a $p\in P$ with $p\notin P'$. Since we have  $\mathbf{Z}_p\not \models x^p \approx 1 \Rightarrow x \approx 1$  and $\mathbf{Z}_p \models x^q \approx 1 \Rightarrow x \approx 1$ for every prime  $q \neq p$,  we get $\mathbf{Z}_p \notin \mathsf{Q}_P$ and $\mathbf{Z}_p \in \mathsf{Q}_{P'}$. But then also $\ext{\mathbf{Z}_p} \notin \mathsf{K}_P^S$ and $\ext{\mathbf{Z}_p} \in \mathsf{K}_{P'}^S$. Hence, by Proposition~\ref{p:HSPU} together with Jónsson's Lemma, we get  $\ext{\mathbf{Z}_p} \notin \mathsf{V}_P^S$ and $\ext{\mathbf{Z}_p} \in \mathsf{V}_{P'}^S$, i.e., $\mathsf{V}_P^S \neq \mathsf{V}_{P'}^S$.
\end{proof}

We finally arrive at the main result of this paper:

\begin{thm}\label{t:main}
\begin{enumerate}[label = \textup{(\roman*)}]
\item[]
\item Each of $\RL$, $\PRL$, $\GA$, and $\GAM$ has continuum-many subvarieties with the amalgamation property.
\item Each of $\cll$, $\cmall$, $\cfle$, and $\crle$ has continuum-many axiomatic extensions with the deductive interpolation property.
\end{enumerate}
\end{thm}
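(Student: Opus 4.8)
The plan is to assemble the results already in hand, since all the substantive work lies in Theorem~\ref{t:ap} and Proposition~\ref{p:primes distinct}. For part (i), I would fix for each of the four varieties the appropriate parameter: $S = \emptyset$ for $\RL$, $S = \set{0}$ for $\PRL$, $S = \set{0,\bot,\top}$ for $\GA$, and $S = \set{0,\bot,\top,\bang}$ for $\GAM$. For such an $S$ the generating algebras $\ext{\mathbf{G}}$ belong to the corresponding big variety, so each $\mathsf{V}_P^S$ is a subvariety of it, and by Theorem~\ref{t:ap} each of these subvarieties has the amalgamation property. It then remains only to count: the set of all primes is countably infinite and hence has $2^{\aleph_0}$ non-empty subsets, while Proposition~\ref{p:primes distinct} guarantees that distinct choices of $P$ yield distinct varieties $\mathsf{V}_P^S$. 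Thus $\set{\mathsf{V}_P^S : P \text{ a non-empty set of primes}}$ is a family of continuum-many distinct subvarieties of the big variety, each with the amalgamation property, which proves part (i).

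For part (ii) I would pass to the logical side through the algebraization machinery. By Theorem~\ref{t:algebraizable} each of $\crle$, $\cfle$, $\cmall$, $\cll$ is strongly algebraizable with equivalent algebraic semantics $\RL$, $\PRL$, $\GA$, $\GAM$, respectively, so Proposition~\ref{prop:isomorphism} yields a dual isomorphism between its lattice of axiomatic extensions and the subvariety lattice of its equivalent algebraic semantics. Under this correspondence the continuum-many distinct subvarieties produced in part (i) transfer to continuum-many distinct axiomatic extensions. To conclude that each such extension enjoys the \prp{DIP}, observe that the extension corresponding to $\mathsf{V}_P^S$ is itself strongly algebraizable with equivalent algebraic semantics $\mathsf{V}_P^S$; since the big varieties have the congruence extension property and this property is inherited by subvarieties, each $\mathsf{V}_P^S$ has it as well, so by Theorem~\ref{t:Blok1991} the associated extension has a local deduction theorem. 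Theorem~\ref{t:DIP-AP} applied to the amalgamation property from Theorem~\ref{t:ap} then delivers the \prp{DIP} for each of these continuum-many extensions.

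The argument poses no real obstacle, being essentially bookkeeping over previously proved facts. The one point worth care is that the bridge theorem Theorem~\ref{t:DIP-AP} must be applied to the individual extensions rather than to the base logics, which is why it matters that the congruence extension property descends to each subvariety $\mathsf{V}_P^S$; with that noted, distinctness of the $\mathsf{V}_P^S$ transports across the dual isomorphism to give genuinely distinct extensions, completing the proof.
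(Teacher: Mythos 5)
Your proposal is correct and takes essentially the same route as the paper: part (i) is obtained by combining Theorem~\ref{t:ap} with Proposition~\ref{p:primes distinct} and counting the non-empty sets of primes, and part (ii) transfers this to the logics via Theorem~\ref{t:algebraizable}, Proposition~\ref{prop:isomorphism}, and Theorem~\ref{t:DIP-AP}. Your extra care in checking that the congruence extension property (and hence a local deduction theorem) descends to each subvariety $\mathsf{V}_P^S$ so that Theorem~\ref{t:DIP-AP} applies to the individual extensions is a detail the paper leaves implicit, and it is handled correctly.
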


\begin{proof}
(i) is immediate from Theorem~\ref{t:ap} and Proposition~\ref{p:primes distinct} because there are continuum-many distinct sets of prime numbers. (ii) then follows from (i) by Theorem~\ref{t:algebraizable} and Proposition~\ref{prop:isomorphism}.
\end{proof}

The continuum-many logics we have identified have deductive interpolation, but they all provably lack the \prp{CIP}. To see this, we scrutinize the strong amalgamation property in the corresponding varieties.

\begin{prop}\label{p:SE}
For every non-empty set of prime numbers $P$ and  $S \subseteq \set{0,\bot,\top, \bang}$, the variety $\mathsf{V}_P^S$ does not have surjective epimorphisms and hence lacks the strong amalgamation property.
\end{prop}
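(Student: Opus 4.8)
The plan is to exhibit an explicit epimorphism in $\mathsf{V}_P^S$ that fails to be surjective, since by Lemma~\ref{l:ES-SAP} a quasivariety (and in particular a variety) with the amalgamation property has the strong amalgamation property if and only if it has surjective epimorphisms; having established the amalgamation property in Theorem~\ref{t:ap}, the failure of surjective epimorphisms will immediately yield the failure of strong amalgamation. So the whole task reduces to producing a single non-surjective epimorphism.

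The natural candidate is the inclusion of a group-based algebra into a strictly larger one where the extra element is ``pinned down'' by the structure. Concretely, I would pick a prime $p \in P$, take the two-element group $\mathbf{0}$ (or $\mathbf{Z}$) and a proper supergroup, say $\mathbf{G} \hookrightarrow \mathbf{H}$ of abelian groups in $\mathsf{Q}_P$ that is not surjective, and pass to the inclusion $\beta \colon \ext{\mathbf{G}} \to \ext{\mathbf{H}}$ furnished by Lemma~\ref{l:2}(i). First I would argue that this inclusion is an epimorphism in $\mathsf{K}_P^S$, hence in $\mathsf{V}_P^S$: given two homomorphisms $\psi_1,\psi_2 \colon \ext{\mathbf{H}} \to \mathbf{C}$ agreeing on $\ext{\mathbf{G}}$, I must show they agree on all of $\ext{\mathbf{H}}$. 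The key observation is that in these algebras every group element is invertible and determined by the group operation together with the negation constant $0 = 1$; an element $a \in H \setminus G$ is forced because its value is constrained by equations tying it to elements already in the image of $\mathbf{G}$. The cleanest route is to choose $\mathbf{G} \hookrightarrow \mathbf{H}$ so that $\mathbf{H}$ is generated over $\mathbf{G}$ by ``divisible'' information — for instance taking $\mathbf{H}$ divisible so that each new element $a$ satisfies $a^n \in G$ for some $n$, whence $\psi_1(a)$ and $\psi_2(a)$ must coincide because they are the unique elements of $\mathbf{C}$ with prescribed $n$-th power (using that in a simple or group-reduct the map $x \mapsto x^n$ is injective on the relevant elements). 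A particularly convenient concrete choice is $\mathbf{G} = \mathbf{Z} \hookrightarrow \mathbf{H} = \mathbb{Q}$ when $P$ permits it, or more robustly an embedding of $\mathbf{G}$ into its essential injective hull from Lemma~\ref{l:embed-inj}, since essentiality is exactly the property that forces uniqueness of extensions.

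Second, I would verify that $\beta$ is not surjective, which is immediate since $\mathbf{G} \subsetneq \mathbf{H}$ gives $\ext{\mathbf{G}} \subsetneq \ext{\mathbf{H}}$. Combining the two yields a non-surjective epimorphism, establishing the failure of surjective epimorphisms, and then Lemma~\ref{l:ES-SAP} delivers the failure of strong amalgamation.

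The main obstacle will be the epimorphism verification: I must ensure the chosen group extension actually forces $\psi_1 = \psi_2$ on the new elements, which requires that every element of $\mathbf{H}$ be ``reachable'' from $\mathbf{G}$ via the algebraic operations available (multiplication, residuation, and the constant $0$) in a way that pins its image uniquely. The danger is picking an extension $\mathbf{G} \hookrightarrow \mathbf{H}$ that is too free, so that new group elements admit distinct homomorphic images into some $\mathbf{C} \in \mathsf{V}_P^S$. The fix is to exploit essentiality or divisibility: I would lean on the fact that in the divisible hull, every new element is a root of an element of $G$, and that in the algebras $\extu{\mathbf{G}}$ roots are unique because $\mathbf{C}$'s group subreduct lies in $\mathsf{Q}_P$ and the torsion is controlled by $\Sigma_P$. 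Verifying this uniqueness carefully — and confirming it works uniformly for both cases $S = \emptyset$ and $S = \set{\bot,\top}$ — is where the real content lies; the rest is routine bookkeeping with Lemma~\ref{l:2} and Lemma~\ref{l:ES-SAP}.
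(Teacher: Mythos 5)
Your overall reduction is exactly the paper's: produce one non-surjective epimorphism, then combine Theorem~\ref{t:ap} with Lemma~\ref{l:ES-SAP} to kill strong amalgamation. The gap is in the choice of the group extension. Uniqueness of roots in $\mathsf{V}_P^S$ is available only for $p$th roots with $p\in P$, since by Corollary~\ref{c:ISP}(i) the variety is $\iso\sub\pr(\mathsf{K}_P^S)$ and therefore satisfies only the quasi-equations $x^p\approx 1\Rightarrow x\approx 1$ for $p\in P$. The divisible hull (equivalently, the injective hull of Lemma~\ref{l:embed-inj}) and the inclusion $\mathbf{Z}\hookrightarrow\mathbb{Q}$ adjoin $q$th roots for \emph{every} prime $q$, including $q\notin P$, and those are not implicitly defined. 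Concretely, for $q\notin P$ the Pr\"ufer group $\mathbb{Z}(q^\infty)$ lies in $\mathsf{Q}_P$ (all its torsion is $q$-torsion), the composite $\mathbb{Q}\to\mathbb{Q}/\mathbb{Z}\to\mathbb{Z}(q^\infty)$ is a nonzero homomorphism vanishing on $\mathbb{Z}$, and adding it to the first-coordinate inclusion $\mathbb{Q}\to\mathbb{Q}\oplus\mathbb{Z}(q^\infty)$ gives two distinct homomorphisms agreeing on $\mathbb{Z}$; by Lemma~\ref{l:2} these lift to the $\mathbf{R}^S$-algebras, so your map is not an epimorphism in $\mathsf{K}_P^S$ unless $P$ is the set of all primes. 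Essentiality does not rescue this: it controls kernels of maps out of the extension, not uniqueness of extensions of a given map; two extensions differ by a homomorphism from $\mathbf{H}/\mathbf{G}$, which for the divisible hull has torsion of all orders. (Also, starting from the trivial group is hopeless: the automorphism of $\ext{\mathbf{Z}}$ induced by $n\mapsto -n$ fixes the image of $\ext{\mathbf{0}}$.)

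The repair is to adjoin only roots of orders drawn from $P$. The paper fixes $p\in P$ and uses the non-surjective self-embedding $\phi$ of $\ext{\mathbf{Z}}$ acting as multiplication by $p$ on $\mathbb{Z}$ and fixing $\bot,\top$ (in your language, the extension $p\mathbb{Z}\subsetneq\mathbb{Z}$, whose quotient is $p$-torsion and hence admits no nonzero map into any member of $\mathsf{Q}_P$). Given $\psi_1,\psi_2\colon\ext{\mathbf{Z}}\to\mathbf{C}$ with $\psi_1\circ\phi=\psi_2\circ\phi$, simplicity of $\ext{\mathbf{Z}}$ (Lemma~\ref{l:basicprop}) disposes of the non-injective case --- a step your sketch omits --- and in the injective case $(\psi_1(1)\cdot\psi_2(-1))^p=\psi_1(p)\cdot\psi_2(-p)$ equals the identity, so the quasi-equation for $p$ forces $\psi_1(1)=\psi_2(1)$; since $1$ generates $\ext{\mathbf{Z}}$, this gives $\psi_1=\psi_2$. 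With that substitution your argument goes through uniformly for $S=\emptyset$ and $S=\set{\bot,\top}$.
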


\begin{proof}
Let $P$ be a non-empty set of prime numbers and $p\in P$. Then we have $\mathbf{Z} \in \mathsf{Q}_P$, i.e., $\ext{\mathbf{Z}} \in \mathsf{K}_P^S$. Consider the embedding $\phi\colon \ext{\mathbf{Z}} \to \ext{\mathbf{Z}}$ defined by 
\[
\phi(a) = 
\begin{cases}
a &\text{if } a \in \set{\bot,\top} \\
pa& \text{if } a\in \mathbb{Z}.
\end{cases}
\]
Clearly $\phi$ is not surjective. Let $\mathbf{A} \in \mathsf{V}_P^S$ with identity $e$ and let  $\psi_1,\psi_2 \colon \ext{\mathbf{Z}} \to \mathbf{A}$ be homomorphisms satisfying $\psi_1 \circ \phi = \psi_2 \circ \phi$. Then, since $\ext{\mathbf{Z}}$ is simple,  either both $\psi_1$ and $\psi_2$ are constant or $\psi_1$ and $\psi_2$ are embeddings. In the first case we clearly have $\psi_1 = \psi_2$, so we may assume that the maps are embeddings. Since $1$ generates $\ext{\mathbf{Z}}$ it suffices to show that $\psi_1(1) = \psi_2(1)$.  Now, by assumption, $(\psi_1(1)\cdot \psi_2(-1))^p =  \psi_1(p) \cdot \psi_2(-p) = e$. But, since, $\mathsf{V}_P^S \models x^p \approx 1 \Rightarrow x \approx 1$, we get $\psi_1(1) \cdot \psi_2(-1) = e$, i.e., $\psi_1(1) = \psi_2(1)$. Hence, $\psi_1 = \psi_2$.
\end{proof}

\begin{remark}
The proof of Proposition~\ref{p:SE} illustrates that $p$th roots are implicitly definable but not explicitly definable in the varieties $\mathsf{V}_P^S$. This reflects the connection between epimorphism-surjectivity and the Beth definability property; see, e.g., \cite{H2000}. We do not, however, further discuss Beth definability here.
\end{remark}

\begin{cor}
For every non-empty set of prime numbers $P$ the extensions $\cllp$, $\cmallp$, $\cflep$, and $\crlep$ do not have the Craig interpolation property.
\end{cor}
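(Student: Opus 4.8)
The plan is to deduce the failure of the Craig interpolation property (\prp{CIP}) directly from Proposition~\ref{p:SE}, using the algebraization already in place. First I would note that each of $\cllp$, $\cmallp$, $\cflep$, and $\crlep$, being an axiomatic extension of a strongly algebraizable logic, is itself strongly algebraizable with the defining equation $\delta(x)$ and equivalence formulas $\Delta(x,y)$ of Theorem~\ref{t:algebraizable}, its equivalent algebraic semantics being the corresponding variety $\mathsf{V}_P^S$. This places the \prp{CIP} under algebraic control, in parallel with the way Theorem~\ref{t:DIP-AP} controls the \prp{DIP}.

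The key step is the bridge theorem matching \prp{CIP} with strong amalgamation: for substructural logics with exchange, a logic has the \prp{CIP} if and only if its equivalent algebraic semantics has the strong amalgamation property. This is the strong-amalgamation counterpart of Theorem~\ref{t:DIP-AP} (cf. \cite{Galatos2006}). Granting it, the corollary is immediate, since Proposition~\ref{p:SE} establishes that each $\mathsf{V}_P^S$ lacks the strong amalgamation property.

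If one prefers to route through what is already stated in the excerpt, I would argue by contradiction. Each of these logics has the \prp{DIP} (established above), so by Theorem~\ref{t:DIP-AP} the variety $\mathsf{V}_P^S$ has the amalgamation property. Were the logic to enjoy the \prp{CIP}, the associated variety would have surjective epimorphisms — this is precisely the point at which \prp{CIP} is strictly stronger than \prp{DIP}, reflecting the connection between \prp{CIP} and Beth-style definability noted in the remark following Proposition~\ref{p:SE}. Amalgamation together with surjective epimorphisms would then yield the strong amalgamation property by Lemma~\ref{l:ES-SAP}, contradicting Proposition~\ref{p:SE}.

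The hard part is not computational but conceptual: pinning down and justifying the bridge that sends the theorem-level \prp{CIP} to strong amalgamation (equivalently, to epimorphism-surjectivity in the presence of amalgamation). One must check that its hypotheses genuinely hold here — algebraizability (Theorem~\ref{t:algebraizable}) together with a local deduction theorem, available since $\mathsf{V}_P^S$ has the congruence extension property and Theorem~\ref{t:Blok1991} applies — so that \prp{CIP} transfers to the \emph{strong} amalgamation property rather than only to ordinary amalgamation. With that bridge secured, all the substantive work resides in Proposition~\ref{p:SE}.
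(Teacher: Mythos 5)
Your proposal is correct and follows essentially the same route as the paper: the paper likewise argues by contradiction that the \prp{CIP} would force the strong amalgamation property for $\mathsf{V}_P^S$ (citing the Kihara--Ono bridge results for substructural logics over $\fl$, applicable since $S$ may be taken to be $\set{\bot,\top}$ or $\emptyset$), and then derives the contradiction with Proposition~\ref{p:SE} via Lemma~\ref{l:ES-SAP}. The only caveat is that the bridge should be invoked as an implication (\prp{CIP} implies strong amalgamation) rather than the biconditional you state --- the exact algebraic correspondent of the \prp{CIP} in this setting is super-amalgamation --- but only that one direction is used, so the argument stands.
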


\begin{proof}
Let $P$ be a non-empty set of prime numbers and  $\vdash^P$ be one of the extensions $\cllp$, $\cmallp$, $\cflep$, or $\crlep$ with $\mathsf{V}_P^S$ its equivalent algebraic semantics. If $\vdash^P$ had the \prp{CIP}, then $\mathsf{V}_P^S$ would have the strong amalgamation property; see, e.g., \cite{GJKO07} and note that, since we can assume that $S= \set{\bot,\top}$ or $S= \emptyset$, the results therein for axiomatic extensions of $\fle$ apply. But then Lemma~\ref{l:ES-SAP} would imply that $\mathsf{V}_P^S$  has surjective epimorphisms, contradicting Proposition~\ref{p:SE}.
\end{proof}

\begin{remark}
The results of this section entail that various other substructural logics have continuum-many extensions with the deductive interpolation property, but without the Craig interpolation property. This applies, for example, to the deductive system for involutive full Lambek calculus with exchange $\m{InFL}_e$ and the deductive system for full Lambek calculus with exchange and right-weakening $\m{FL}_{eo}$ (cf. \cite{GJKO07}). Moreover, since linear logic and multiplicative-additive linear logic are axiomatic extensions of intuitionistic linear logic and intuitionistic multiplicative-additive linear logic, respectively, the results also follow for these logics (see, e.g., \cite{Okada1999} for a definition of intuitionistic linear logic and its fragments). 
\end{remark}

Although the logics we have identified do not have the \prp{CIP}, they have a weak form of interpolation with respect to $\to$. We say that a logic $\vdash$ has the \emph{guarded interpolation property} if whenever $\vdash\bang\varphi\to\bang \psi$, there exists a formula $\delta$ whose variables are among those contained in both $\varphi$ and $\psi$ such that $\vdash\bang\varphi\to\bang\delta$ and $\vdash \bang\delta\to\bang\psi$. 

\begin{lemma}\label{l:DDP}
For every set of formulas $\Gamma\cup\set{\phi,\psi}$,
\[
\Gamma, \phi \cll \psi \iff \Gamma \cll \bang\phi \to \bang \psi.
\]
\end{lemma}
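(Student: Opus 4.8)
The plan is to reduce the lemma directly to Avron's Deduction Theorem by instantiating the latter at $\bang\psi$ in place of $\psi$. Making that substitution yields
\[
\Gamma, \phi \cll \bang\psi \iff \Gamma \cll \bang\phi \to \bang\psi,
\]
so the claimed equivalence follows once I show that $\Gamma, \phi \cll \bang\psi \iff \Gamma, \phi \cll \psi$. More generally, I would prove the auxiliary fact that for any set of formulas $\Delta$ one has $\Delta \cll \psi$ if and only if $\Delta \cll \bang\psi$, and then apply it with $\Delta = \Gamma \cup \set{\phi}$. Chaining this auxiliary equivalence with the instantiated deduction theorem closes both directions of the lemma at once.

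First I would establish the left-to-right direction of the auxiliary claim: given a derivation of $\psi$ from $\Delta$ in the Hilbert calculus of Figure~\ref{fig:calculus}, appending a single application of the necessitation rule (nec) produces a derivation of $\bang\psi$ from $\Delta$, so $\Delta \cll \bang\psi$. For the converse I would invoke the axiom scheme $(\bang\text{T})$, namely $\bang\psi \to \psi$; combining a derivation of $\bang\psi$ from $\Delta$ with $(\bang\text{T})$ by (mp) yields $\psi$, whence $\Delta \cll \psi$. This gives $\Delta \cll \psi \iff \Delta \cll \bang\psi$, which is all that is needed.

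The one point demanding care—and the nearest thing to an obstacle—is that the left-to-right step relies on (nec) being a genuine rule of the consequence relation that may be applied to formulas already derived from the hypotheses in $\Delta$, not merely to theorems. This strong, global form of necessitation is precisely the feature of $\cll$ responsible for the $\bang$ appearing on the antecedent in Avron's Deduction Theorem, so invoking it here is entirely consistent with the setup; no genuinely new work is required beyond the theorem already stated. A purely semantic route is also available, verifying the corresponding equational statement in every girale via the identities (G1)--(G4) through the algebraization of Section~\ref{sec:algebraization}, but the syntactic argument above is shorter and self-contained.
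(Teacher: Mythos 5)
Your proof is correct, but it routes the argument differently from the paper. Both proofs pivot on Avron's Deduction Theorem, but you instantiate it at $\bang\psi$ and do the bridging work on the \emph{premise} side, via the auxiliary equivalence $\Delta \cll \psi \iff \Delta \cll \bang\psi$, established syntactically from the global rule (nec) and the axiom ($\bang$T). The paper instead instantiates the Deduction Theorem at $\psi$, obtaining $\Gamma,\phi\cll\psi\iff\Gamma\cll\bang\phi\to\psi$, and then bridges on the \emph{conclusion} side: it shows $\Gamma\cll\bang\phi\to\psi$ iff $\Gamma\cll\bang\phi\to\bang\psi$, arguing semantically through algebraization that in every girale $\bang$ is idempotent, order-preserving, and contracting (so $\bang a\leq b$ implies $\bang a=\bang\bang a\leq\bang b$, and conversely $\bang b\leq b$). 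Your approach is more elementary and self-contained in that it stays entirely inside the Hilbert calculus; its one delicate point, which you correctly flag, is that (nec) must be the strong (global) necessitation rule applicable to formulas derived from hypotheses --- and it is, as the paper's footnote on the algebraization of $\cll$ makes explicit. In fact your auxiliary claim can be obtained without even inspecting the calculus: taking $\Gamma=\emptyset$ and the consequent $\bang\psi$ in Avron's Deduction Theorem, $\psi\cll\bang\psi$ is equivalent to $\cll\bang\psi\to\bang\psi$, which is an instance of (A1); transitivity of the consequence relation then gives the left-to-right half of your auxiliary equivalence. The paper's semantic detour buys nothing extra here beyond consistency with its general reliance on girales, so your version is a perfectly good, arguably cleaner, substitute.
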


\begin{proof}
The equivalence follows from Avron's deduction theorem together with the fact that, by algebraization and since $\bang$ is idempotent, order preserving, and contracting in every girale, $\cll \bang \psi \to \psi$ and if $\Gamma \cll \bang\phi \to \psi$, then $\Gamma \cll \bang \phi \to \bang \psi$.
\end{proof}

From Remark~\ref{r:DIP-one-formula} together with Lemma~\ref{l:DDP} we get:

\begin{prop}\label{p:guarded-craig}
An axiomatic extension $\vdash$  of $\cll$ has the deductive interpolation property if and only if it has the guarded interpolation property.
\end{prop}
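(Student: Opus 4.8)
The plan is to establish the biconditional by reducing both interpolation notions to a common one-variable form, using the translation supplied by Lemma~\ref{l:DDP}. Since $\cll$ is finitary and conjunctive with $\kappa(x,y) = x\meet y$, Remark~\ref{r:DIP-one-formula} tells us that any axiomatic extension $\vdash$ of $\cll$ has the deductive interpolation property if and only if the following holds: whenever $\phi\vdash\psi$, there is a formula $\delta$ with $\var(\delta)\subseteq\var(\phi)\cap\var(\psi)$ such that $\phi\vdash\delta$ and $\delta\vdash\phi$. (Here I am using that the equivalence in Remark~\ref{r:DIP-one-formula} applies to every axiomatic extension, since these remain finitary and conjunctive.) So the first step is to record this reformulation of the \prp{DIP} as the working criterion for both directions.

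Next I would translate the guarded interpolation property into the same one-variable shape. By Lemma~\ref{l:DDP} applied with $\Gamma=\emptyset$, we have $\phi\vdash\psi$ if and only if $\vdash\bang\phi\to\bang\psi$. Thus a deduction $\phi\vdash\psi$ is literally the same data as a theorem $\vdash\bang\phi\to\bang\psi$, and an interpolant $\delta$ for one is an interpolant for the other: the variable condition $\var(\delta)\subseteq\var(\phi)\cap\var(\psi)$ is identical, and again by Lemma~\ref{l:DDP} the two conditions $\phi\vdash\delta$, $\delta\vdash\phi$ (wait, for interpolation we need $\phi\vdash\delta$ and $\delta\vdash\psi$) correspond exactly to $\vdash\bang\phi\to\bang\delta$ and $\vdash\bang\delta\to\bang\psi$. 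I should be careful here to match the DIP criterion's $\phi\vdash\delta,\ \delta\vdash\phi$ against what is actually needed, namely $\phi\vdash\delta$ and $\delta\vdash\psi$; the statement of Remark~\ref{r:DIP-one-formula} as written has a typo ($\delta\vdash\phi$ should read $\delta\vdash\psi$), and the intended criterion is that $\phi\vdash\delta$ and $\delta\vdash\psi$ with the shared-variable constraint.

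With these two translations in hand, the proof is essentially a dictionary matching. For the forward direction, assume $\vdash$ has the \prp{DIP}, and suppose $\vdash\bang\varphi\to\bang\psi$; by Lemma~\ref{l:DDP} this gives $\varphi\vdash\psi$, so the one-variable DIP criterion furnishes $\delta$ with $\var(\delta)\subseteq\var(\varphi)\cap\var(\psi)$, $\varphi\vdash\delta$, and $\delta\vdash\psi$; applying Lemma~\ref{l:DDP} again converts these deductions back into $\vdash\bang\varphi\to\bang\delta$ and $\vdash\bang\delta\to\bang\psi$, which is exactly guarded interpolation. For the converse, assume guarded interpolation and suppose $\varphi\vdash\psi$; Lemma~\ref{l:DDP} yields $\vdash\bang\varphi\to\bang\psi$, guarded interpolation produces the required $\delta$, and one last application of Lemma~\ref{l:DDP} recovers $\varphi\vdash\delta$ and $\delta\vdash\psi$, establishing the one-variable DIP criterion and hence the \prp{DIP}.

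The main thing to get right is not any deep obstacle but the careful bookkeeping between the deduction form and the implication form, and specifically that Lemma~\ref{l:DDP} is applied in the trivial-context case $\Gamma=\emptyset$ in both directions so that each deducibility statement is interchangeable with the corresponding guarded implication. I expect no genuine difficulty beyond ensuring the variable-containment condition is preserved verbatim under the translation—which it is, since the translation does not alter the variables occurring in $\varphi$, $\psi$, or $\delta$—and confirming that the conjunctivity hypothesis needed for Remark~\ref{r:DIP-one-formula} holds for arbitrary axiomatic extensions, which follows because conjunctivity with $\kappa(x,y)=x\meet y$ is inherited by every extension.
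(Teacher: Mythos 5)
Your proposal is correct and follows essentially the same route as the paper, which derives the proposition directly from Remark~\ref{r:DIP-one-formula} (the one-formula reformulation of the \prp{DIP}) combined with Lemma~\ref{l:DDP} applied with $\Gamma=\emptyset$; you are also right that the remark's ``$\delta\vdash\phi$'' is a typo for ``$\delta\vdash\psi$'', and your observation that Lemma~\ref{l:DDP} (via Avron's deduction theorem) and conjunctivity persist in arbitrary axiomatic extensions is exactly the implicit justification the paper relies on.
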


\begin{remark}
Proposition~\ref{p:guarded-craig} is just a special case of a general observation of \cite[Section 4.4]{CP99} about conjunctive deductive systems with a deduction-detachment theorem ($\prp{DDT}$). In view of Proposition~\ref{p:guarded-craig} the witness for the $\prp{DDT}$ that we consider is the formula $ \bang x \to \bang y$. However, considering the formula $\bang x \to y$  of Avron's deduction theorem yields yet another equivalent interpolation property.
\end{remark}

\begin{cor}\label{t:modal-craig}
For every set of prime numbers $P \neq \emptyset$ the axiomatic extension $\cllp$ has the guarded interpolation property.
\end{cor}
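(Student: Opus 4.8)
The plan is to derive Corollary~\ref{t:modal-craig} as a direct application of the two immediately preceding results. The target statement asserts that $\cllp$ has the guarded interpolation property, and by Proposition~\ref{p:guarded-craig} this is equivalent---for any axiomatic extension of $\cll$---to the deductive interpolation property. Since $\cllp$ is precisely such an axiomatic extension (it corresponds to the variety $\mathsf{V}_P^{\set{0,\bot,\top,\bang}}$ of girales), it suffices to invoke that $\cllp$ has the \prp{DIP}.

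First I would recall that the deductive interpolation property for $\cllp$ has already been established. This follows from Theorem~\ref{t:ap}, which gives that $\mathsf{V}_P^S$ has the amalgamation property for every nonempty $P$ and every $S \subseteq \set{0,\bot,\top,\bang}$, combined with Theorem~\ref{t:DIP-AP} linking amalgamation of the equivalent algebraic semantics to the \prp{DIP} of the associated strongly algebraizable deductive system. (This is exactly the content of the earlier unnumbered Corollary following Theorem~\ref{t:ap}.) Here one uses that $\cll$ is strongly algebraizable with equivalent algebraic semantics $\GAM$ by Theorem~\ref{t:algebraizable}(iv), and that $\cll$ has a local deduction theorem---indeed an explicit one via Avron's Deduction Theorem---so the hypotheses of Theorem~\ref{t:DIP-AP} are met.

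Having the \prp{DIP} in hand, the second and final step is to apply Proposition~\ref{p:guarded-craig} in the direction that deductive interpolation implies guarded interpolation. This completes the argument.

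There is essentially no obstacle here, since all the substantive work has been done in the preceding propositions: the amalgamation property for the girale varieties (the genuinely hard part, resting on injectivity of abelian groups via Lemma~\ref{l:embed-inj} and Proposition~\ref{p:AP-groups}) and the equivalence of the two interpolation notions (which itself reduces, via Lemma~\ref{l:DDP} and Remark~\ref{r:DIP-one-formula}, to Avron's deduction theorem and the idempotency of $\bang$ in girales). The corollary is thus a one-line consequence, and the only care needed is to confirm that $\cllp$ indeed falls under the scope of Proposition~\ref{p:guarded-craig} as an axiomatic extension of $\cll$, which it does by construction.
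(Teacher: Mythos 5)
Your proposal is correct and matches the paper's intended argument exactly: the corollary is stated without proof precisely because it follows immediately from the earlier corollary establishing the \prp{DIP} for $\cllp$ (via Theorems~\ref{t:ap} and \ref{t:DIP-AP}) together with Proposition~\ref{p:guarded-craig}. Your additional care in checking that $\cllp$ is an axiomatic extension of $\cll$ and that the hypotheses of Theorem~\ref{t:DIP-AP} are satisfied is sound and consistent with the paper.
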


\section{Continuum-many failures of the deductive interpolation property}
We conclude by showing that the deductive interpolation property fails for continuum-many axiomatic extensions of $\cll$. Our approach is to show that the amalgamation property fails for continuum-many varieties of girales, where we obtain these varieties by adding a finite algebra to the class of generating algebras of each of the continuum-many varieties constructed in the previous section.

Sugihara monoids \cite{FG2019} have been studied extensively as algebraic models of certain relevance logics, and we will exploit them here in order to construct the varieties of girales just mentioned. For $n\in \mathbb{N}$, define
$$C_{2n+1} = \set{-n,-n+1,\cdots, 0, \cdots, n-1, n}.$$
The $(2n+1)$-element odd Sugihara chain is the totally ordered commutative residuated lattice $\mathbf{C}_{2n+1}= \alg{C_{2n+1}, \meet,\join,\cdot,\to, 0}$, where $x \meet y = \min(x,y)$, $x \join y = \max(x,y)$,  
\[
x\cdot y = \begin{cases}
x \meet y & \text{if } \lvert x \rvert  = \lvert y \rvert \\
y 	           & \text{if } \lvert x \rvert  < \lvert y \rvert \\
x	           & \text{if } \lvert y \rvert  < \lvert x \rvert \\
\end{cases}
\quad
\text{and}
\quad
x \to y = \begin{cases}
(-x) \join y & \text{if } x \leq y \\
(-x) \meet y & \text{otherwise},
\end{cases}
\]
where $\lvert x \rvert$ is the absolute value of $x$. Note that $0$ is a negation constant satisfying $x\to 0 = -x$ for each $x\in C_{2n+1}$. Moreover, for every $x\in C_{2n+1}$, we have $x\cdot x = x\meet x = x$. Thus, by Lemma~\ref{l:girale}, the Sugihara chain $\mathbf{C}_{2n+1}$ gives rise to a girale $\alg{C_{2n+1}, \meet,\join,\cdot, \to, 0, 0, -n, n,\bang}$ with $\bang x = x \meet 0$. As in the previous section, for every $S\subseteq \set{0,\bot,\top, \bang}$ we denote by $\mathbf{C}_{2n+1}^S$ its respective reduct. For $S \subseteq \set{0,\bot,\top, \bang}$ we define $\mathsf{K}_7^S = \iso(\set{\mathbf{C}_1^S, \mathbf{C}_3^S,\mathbf{C}_5^S,\mathbf{C}_7^S})$ and for a non-empty set of prime numbers $P$ we set $\mathsf{W}^S_P = \vr( \mathsf{K}_P^S \cup \mathsf{K}_{7}^S)$. We denote  by $\clls$, $\cmalls$, $\cfles$, and $\crles$ the axiomatic extensions of  $\cll$, $\cmall$, $\cfle$, and $\crle$ corresponding to the varieties $\mathsf{W}_P^{ \set{0,\bot,\top, \bang}}$, $\mathsf{W}_P^{ \set{0,\bot,\top}}$,  $\mathsf{W}_P^{ \set{0}}$, and $\mathsf{W}_P^\emptyset$, respectively.  
Note that, for the same reason as in the last section, in what follows we can again assume that either $S= \set{\bot,\top}$ or $S =\emptyset$.

\begin{lemma}\label{l:sugihara-FSI}
Let $P$ be a non-empty set of prime numbers and $S\subseteq \set{0,\bot,\top, \bang}$. 
\begin{enumerate}[label = \textup{(\roman*)}]
\item $\hm\sub\pu(\mathsf{K}_P^S \cup \mathsf{K}_7^S) = \mathsf{K}_P^S \cup \mathsf{K}_7^S$.
\item The class of finitely subdirectly irreducible members of $\mathsf{W}_P^S$ is  $\mathsf{K}_P^S \cup \mathsf{K}_7^S$.
\end{enumerate}
\end{lemma}

\begin{proof}
For (i), we observe that, by Proposition~\ref{p:HSPU} and since up to isomorphism $\mathsf{K}_7^S$ consists of only finitely many finite algebras,  $\hm\sub\pu(\mathsf{K}_P^S \cup \mathsf{K}_7^S)  = \mathsf{K}_P^S \cup \hm\sub(\mathsf{K}_7^S)$. A straightforward calculation shows that $\mathsf{K}_7^S = \hm\sub(\mathbf{C}_7^S)$, yielding  $\hm\sub(\mathsf{K}_7^S) = \mathsf{K}_7^S$. 

For (ii) note that, by Jónsson's Lemma together with part (i),  every subdirectly irreducible member of $\mathsf{W}_P^S$ is contained in $\mathsf{K}_P^S \cup \mathsf{K}_7^S$, so  $\mathsf{W}_P^S = \iso\sub\pr(\mathsf{K}_P^S \cup \mathsf{K}_7^S)$. 
Hence, by \cite[Lemma 1.5]{CD90}, each finitely subdirectly irreducible member of $\mathsf{W}_P^S$ is contained in $\iso\sub\pu(\mathsf{K}_P^S \cup \mathsf{K}_7^S) = \mathsf{K}_P^S \cup \mathsf{K}_7^S$.  
On the other hand, by Corollary~\ref{c:ISP} and since any non-trivial finite, totally ordered commutative residuated lattice is subdirectly irreducible, each algebra in $\mathsf{K}_P^S \cup \mathsf{K}_7^S$ is finitely subdirectly irreducible.
\end{proof}

The next result shows that adding the algebra $\mathbf{C}_7^S$ to the generators of $\mathsf{V}_P^S$ results in a variety that fails to have the amalgamation property.

\begin{thm}
For any non-empty set of prime numbers $P$ and $S \subseteq\set{0,\bot,\top, \bang}$ the variety $\mathsf{W}_P^S$ does not have the amalgamation property. 
\end{thm}

\begin{proof}
By Lemma~\ref{l:sugihara-FSI}(ii), the class of finitely subdirectly members of $\mathsf{W}_P^S$ is exactly $\mathsf{K}_P^S \cup \mathsf{K}_7^S$ and, by Lemma~\ref{l:sugihara-FSI}(i), it is closed under subalgebras. Moreover, as again $\mathsf{W}_P^S$ is term-equivalent to a variety of (possibly bounded or pointed) commutative residuated lattices $\mathsf{W}_P^S$ has the congruence extension property, by \cite[Lemma 3.57]{GJKO07}. Hence, by Theorem~\ref{t:lift amalg}, it is enough to show that $\mathsf{K}_P^S \cup \mathsf{K}_7^S$ does not have the one-sided amalgamation property. To see this, we consider the span  $\alg{\mathbf{C}_5^S,\mathbf{C}_7^S,\mathbf{C}_7^S, \phi_1 ,\phi_2}$, where $\phi_1(-2) =\phi_2(-2) = -3$, $\phi_1(0) = \phi_2(0) = 0$, $\phi_1(2) = \phi_2(2) = 3$, $\phi_1(-1) = -2$, $\phi_1(1) = 2$, $\phi_2(-1) = -1$, $\phi_2(1) = 1$. Suppose for a contradiction that the span has a one-sided amalgam $\alg{\psi_1,\psi_2, \mathbf{D}}$ in $\mathsf{K}_P^S \cup \mathsf{K}_7^S$. 
Note that up to isomorphism $\mathbf{C}_7^S$ is the only member of  $\mathsf{K}_P^S \cup \mathsf{K}_7^S$ of height $7$ or higher. But, since $\psi_1$ is an embedding of $\mathbf{C}_7^S$ into $\bf D$, it follows that $\bf D$ needs to be of height $7$ or higher, so without loss of generality we can assume that $\mathbf{D} = \mathbf{C}_7^S$ and $\psi_1$ is the identity map.  Hence we have $\psi_2(-1) = \psi_2(\phi_2(-1)) =  \phi_1(-1) = -2$ and $\psi_2(-3) =\psi_2(\phi_2(-2)) =  \phi_1(-2) = -3$. Thus, since $\psi_2(-3)\leq \psi_2(-2) \leq \psi_2(-1)$, either $\psi_2(-2) = \psi_2(-3)$ or $\psi_2(-2) = \psi(-1)$. The first case yields
 \[
 -3 = -3 \cdot 3 = \psi_2(-2) \cdot \psi_2(3) = \psi_2(-2 \cdot 3) = \psi_2(3) = 3,
 \]
 a contradiction. For the second case, note that if $\psi_2(-2) = \psi_2(-1) = -2$, then also  $\psi_2(2) = \psi_2(1) = 2$, since $\phi$ preserves the involution. Thus
 \[
 -2 = -2 \cdot 2 = \psi_2(-1) \cdot \psi_2(2) = \psi_2(-1 \cdot 2) = \psi_2(2) = 2,
 \]  
 which is again a contradiction.
\end{proof}
Theorem~\ref{t:DIP-AP} yields the following corollary about the corresponding axiomatic extensions.
\begin{cor}
For any non-empty set of prime numbers $P$ the extensions $\clls$, $\cmalls$, $\cfles$, and $\crles$ do not have the deductive interpolation property.
\end{cor}

\begin{prop}\label{p:primes-distinct2}
For two non-empty sets  $P$ and $P'$ of prime numbers with $P\neq P'$ and  $S \subseteq \set{0,\bot,\top, \bang}$  we have $\mathsf{W}_P^S \neq \mathsf{W}_{P'}^S$. 
\end{prop}
\begin{proof}
If $P \neq P'$, then, by Proposition~\ref{p:primes distinct}, $\mathsf{V}_P^S \neq \mathsf{V}_{P'}^S$. Without loss of generality we may assume that  $\mathsf{V}_P^S \nsubseteq \mathsf{V}_{P'}^S$. Hence, by  Corollary~\ref{c:ISP}(ii),  there exists an algebra $\mathbf{A} \in \mathsf{K}_P^S$ with $\mathbf{A} \notin \mathsf{K}_{P'}^S$.  Moreover, the only member of $\mathsf{K}_7^S$  that is of height $3$ is $\mathbf{C}_3^S$ which is isomorphic to $\ext{\mathbf{0}}$, where $\bf 0$ is a trivial group. Hence, since all non-trivial members of ${\mathsf{K}_{P'}^S}$ have height 3, we get  $\mathbf{A} \in \mathsf{K}_{P}^S \cup \mathsf{K}_7^S$, $\mathbf{A} \notin \mathsf{K}_{P'}^S \cup \mathsf{K}_7^S$ and  Lemma~\ref{l:sugihara-FSI}(ii) yields that $\mathsf{W}_{P}^S \neq \mathsf{W}_{P'}^S$. 
\end{proof}

\begin{thm}\label{t:noDIP}
\begin{enumerate}[label = \textup{(\roman*)}]
\item[]
\item Each of $\RL$, $\PRL$, $\GA$, and $\GAM$ has continuum-many subvarieties without the amalgamation property.
\item Each of $\cll$, $\cmall$, $\cfle$, and $\crle$ has continuum-many axiomatic extensions without the deductive interpolation property.
\end{enumerate}
\end{thm}
\begin{proof}
(i) is immediate from Proposition~\ref{p:primes-distinct2} and the fact that there are continuum-many sets of prime numbers and (ii) follows from (i) by Theorem~\ref{t:algebraizable} and Proposition~\ref{prop:isomorphism}.
\end{proof}

\begin{remark}
Note that for $\cfle$ and $\crle$ the result already follows from \cite{Mak77}, in which it is shown that of the continuum-many superintuitionistic logics only finitely many have the deductive interpolation property. However, as superintuitionistic logics do not have an involutive negation, the result is new for $\cll$ and $\cmall$. Again, a similar result is also entailed for various other substructural logics, e.g., for the deductive systems of $\m{InFL}_e$ and $\m{FL}_{eo}$.
\end{remark}

\bibliographystyle{plain}
\bibliography{bibliography}

\end{document}